\newtheorem{theorem}{Theorem}[section]
\newtheorem{lemma}[theorem]{Lemma}
\newtheorem{proposition}[theorem]{Proposition}
\newtheorem{corollary}[theorem]{Corollary}
\theoremstyle{definition}
\newtheorem{definition}[theorem]{Definition}
\newtheorem{example}[theorem]{Example}
\theoremstyle{remark}
\theoremstyle{conjecture}
\newtheorem{remark}[theorem]{Remark}
\newtheorem{conjecture}[theorem]{Conjecture}
\newtheorem{question}[theorem]{Question}
\newcommand{\Ker}{\operatorname{Ker}}
\newcommand{\im}{\operatorname{Im}}
\newcommand{\rk}{\operatorname{rk}}
\newcommand{\Bigwedge}{\mathord{\adjustbox{totalheight=.85\baselineskip}{$\bigwedge$}}}
\def\z2{\mathbb{Z}_2}
\def\Z{\mathbb{Z}}
\def\p{P(m,n)}
\def\zp{\mathbb{Z}_p}
\def\zz{\mathbb{Z}_2 \oplus \mathbb{Z}_2}
\def\coh{H^*(X; \mathbb{Z}_2)}
\def\cpn{\mathbb{C}P^n}
\def\rp{\mathbb{R}P^}
\def\scp{S^m \times \mathbb{C}P^n}
\def\sph{S^{n_1} \times \cdots \times S ^{n_k}}
\begin{document}

\title[Group actions on products of Dold manifolds]{Free actions of finite  groups on products of Dold manifolds}
\author{PINKA DEY}
\address{Department of Mathematical Sciences, Indian Institute of Science Education and Research (IISER) Mohali, Knowledge City, Sector 81, SAS Nagar, Manauli (PO), Punjab 140306, India.}
\email{pinkadey11@gmail.com.}

\subjclass{Primary 57S25; Secondary 57S17, 55T10}
\keywords{Cohomology algebra; Dold manifold; equivariant map; free rank; Leray-Serre spectral sequence.}
\begin{abstract}
The Dold manifold $ P(m,n)$ is the quotient of $S^m \times \mathbb{C}P^n$ by the free involution that acts antipodally  on $ S^m $ and by complex conjugation on $ \mathbb{C}P^n $. In this paper, we investigate free actions of finite groups on products of Dold manifolds. We show that if a finite group $ G $ acts freely and mod 2 cohomologically trivially on a finite-dimensional CW-complex homotopy equivalent to ${\displaystyle \prod_{i=1}^{k} P(2m_i,n_i)}$, then $G\cong (\mathbb{Z}_2)^l$ for some $l\leq k$. This is achieved by first proving a similar assertion for $ \displaystyle \prod_{i=1}^{k} S^{2m_i} \times \mathbb{C} P^{n_i} $. 
 We also determine the possible mod 2 cohomology algebra of orbit spaces of arbitrary free involutions on Dold Manifolds, and give an application to $ \mathbb{Z}_2 $-equivariant maps.

\end{abstract}
\maketitle

\section{Introduction}

In 1925, Hopf stated the following topological spherical space form problem as follows:\par \textit{Classify all manifolds  with universal cover homeomorphic to the $n$-dimensional sphere $S^n$ for $n>1$}.\par This is equivalent to determining all finite groups that can act freely on $S^n$. Smith \cite{smith} showed that such a group $G$ must have periodic cohomology, i.e., $ G $ does not contain a subgroup of the form $ \mathbb{Z}_p \oplus \mathbb{Z}_p$ for any prime $p$.\par
A natural generalization of the above problem is to classify all finite groups that can act freely on products of finitely many spheres. More generally, one can ask a similar question for arbitrary topological spaces. This led to the concept of free rank of symmetry of a topological space, defined as follows:  
\begin{definition}
	Let $X$ be a finite-dimensional CW-complex and $p$ be a prime number. The free $p$-rank of $X$, denoted by $\textrm{frk}_p(X)$, is the largest $r$ such that $(\mathbb{Z}_p)^r$ acts freely on $X$. The smallest value of $r$ over all primes $p$ is called
	the free rank of $X$ and denoted by frk($ X $).
\end{definition} 
By Smith's result, we get that \begin{displaymath}
\textrm{frk}_p \,(S^ n)= \left\{   \begin{array}{lll}
1   & \textrm{if $n$ is odd and $ p $ is arbitrary,}\\
1 & \textrm{if $n$ is even and $ p=2$,}\\
0 & \textrm{if $n$ is even and $ p>2 $. }\\
\end{array} \right.
\end{displaymath}
For products of finitely many spheres, the following statement appears  as a question \cite[Question 7.2]{adem-browder} or as a conjecture \cite[p. 28]{adem-davis}.
\begin{conjecture}
	If $ (\mathbb{Z}_p)^r $ acts freely on $S^{n_1}\times S^{n_2} \times \cdots \times S^{n_k}  $, then $ r\leq k$.
\end{conjecture}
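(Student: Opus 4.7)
The plan is to exploit the Borel construction together with the Leray--Serre spectral sequence in mod $p$ cohomology. Writing $X = S^{n_1} \times \cdots \times S^{n_k}$ and $G = (\mathbb{Z}_p)^r$, form the Borel fibration
\[
X \longrightarrow X_G := EG \times_G X \longrightarrow BG.
\]
Because $G$ acts freely on $X$, the natural map $X_G \to X/G$ is a homotopy equivalence, and since $X/G$ is a finite-dimensional CW-complex the total mod $p$ cohomology $H^*(X_G; \mathbb{Z}_p)$ is a finite-dimensional $\mathbb{Z}_p$-vector space. This finiteness constraint is the sole input we have, and everything afterwards is about translating it into a bound on $r$.

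The key algebraic step is to track transgressions in the spectral sequence with $E_2^{p,q} = H^p(BG; \mathcal{H}^q(X; \mathbb{Z}_p))$. When $G$ acts trivially on $H^*(X; \mathbb{Z}_p)$, one has $E_2 \cong H^*(BG; \mathbb{Z}_p) \otimes \bigwedge(x_1, \ldots, x_k)$ with $|x_i| = n_i$, and each $x_i$ is transgressive, producing classes $\alpha_i := d_{n_i+1}(x_i) \in H^{n_i+1}(BG; \mathbb{Z}_p)$. Finite-dimensionality of $E_\infty$ forces the ideal generated by $\alpha_1, \ldots, \alpha_k$ (or by appropriate squares, or Evens norms, when some $n_i$ is even and $p$ is odd) to have finite codimension in $H^*(BG; \mathbb{Z}_p)$. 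Since $H^*(BG; \mathbb{Z}_p)$ is polynomial (up to an exterior factor when $p$ is odd) of Krull dimension $r$, any ideal of finite codimension requires at least $r$ homogeneous generators, giving $k \geq r$ as desired.

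The hard part, and the reason this statement remains a conjecture rather than a theorem, is the possible non-trivial $G$-action on $H^*(X; \mathbb{Z}_p)$: elements of $G$ may permute the spherical factors or reverse orientations, so the coefficient system $\mathcal{H}^*(X; \mathbb{Z}_p)$ on $BG$ need not be constant. I would first pass to the subgroup $G_0 \leq G$ of elements acting trivially on $H^*(X; \mathbb{Z}_p)$; the quotient $G/G_0$ embeds into a finite permutation group, but in principle some of the $p$-rank can be genuinely lost in passing to $G_0$, which is precisely the delicate point that keeps the argument from being fully uniform. Once inside $G_0$, one must further rule out higher differentials that could collapse the transgression classes in a way that evades the Krull-dimension bound; the arguments of Carlsson and Adem--Browder handle this when all the $n_i$ are equal, while the mixed-parity mixed-dimension case requires finer spectral-sequence bookkeeping, which I would attempt to carry out by letting Steenrod operations act on the $\alpha_i$ to produce enough polynomial relations in $H^*(BG; \mathbb{Z}_p)$. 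These considerations motivate the cohomologically trivial hypothesis imposed in the main results of this paper.
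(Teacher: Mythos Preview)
The statement you are addressing is presented in the paper as an open \emph{conjecture}, not as a theorem; the paper offers no proof of it. Immediately after stating it, the paper surveys the known partial results (Carlsson for equidimensional spheres with homologically trivial action, Adem--Browder, Yal\c{c}in, Hanke's stable result, Okutan--Yal\c{c}in) and explicitly says ``The general case is still open.'' There is therefore nothing in the paper to compare your proposal against.

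Your write-up is in fact consistent with this: you yourself say ``the reason this statement remains a conjecture rather than a theorem'' and correctly isolate the non-trivial $G$-action on $H^*(X;\mathbb{Z}_p)$ as the essential obstruction. That is precisely why the paper's own main theorems (Theorems~\ref{free-rank-univ-cover-introduction} and~\ref{free-rank-dold-introduction}) impose the mod~2 cohomologically trivial hypothesis rather than attempting the general conjecture.

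One genuine gap in your sketch, even under the cohomologically trivial assumption: the assertion that ``each $x_i$ is transgressive'' is not automatic when the dimensions $n_i$ differ. A generator $x_i$ of degree $n_i$ may support an earlier differential $d_r$ with $r < n_i+1$ landing in a bidegree involving products of lower-degree generators $x_j$ with base classes, so it need not survive to the transgression page. Controlling these intermediate differentials is exactly what makes the mixed-dimension case harder than the equidimensional case handled by Carlsson and Adem--Browder, and your Steenrod-operation idea does not by itself close this gap.
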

\noindent Carlsson \cite{carlsson-nonexistence, carlsson-p>2} proved the above conjecture for homologically trivial actions on products of equidimensional spheres. Adem and Browder \cite{adem-browder} proved that $ \textrm{frk}_p((S^n)^k) =k$ with the only remaining cases as $ p=2 $ and $ n=1, 3, 7 $. Later, Yal\c{c}in \cite{yalcin-group-extensions}  proved that $\textrm{frk}_2((S^1)^k)=k  $. The most general result which settles a stable form of the above conjecture is due to Hanke \cite[Theorem 1.3]{hanke}, which states  that
$${\textrm{frk}_p}\big(\sph\big)=k_0 \hspace{4mm}\textrm{if}\hspace{2mm} p>3(n_1+\cdots+n_k),$$
where $k_0$ is the number of odd-dimensional spheres in the product.\par
In a recent work \cite[Theorem 1.2]{okutan-yalcin}, Okutan and Yal\c{c}in proved the conjecture in the case when the the dimensions $\{n_i\}$ of the spheres are high compared to the differences $|n_i - n_j|$ between the dimensions. The general case is still open.\par
For free actions of  arbitrary finite  groups, there is a more general conjecture due to Benson-Carlson\cite{benson-carlson}.  For a finite group, define the rank of $G$, denoted by $\rk(G)$, as 
$$ \rk(G) = \textrm{ max} \:  \big\{r \;|\; (\mathbb{Z}_p)^r \leqslant G \; \textrm{for some prime $p$ } \big\} $$
and define 
$$ h(G) = \textrm{ min} \:\big\{k\;|\; G\textrm{ acts freely on } \sph \big\} .$$
\begin{conjecture}\hspace{-1mm}(Benson-Carlson)\;
	 If $G$ is a finite group then, $\textrm{rk}(G)= h(G)$.
\end{conjecture}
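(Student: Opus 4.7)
The equality $\rk(G) = h(G)$ decomposes naturally into two inequalities, and I would attack them separately. The plan is first to handle $h(G) \geq \rk(G)$ by reduction to elementary abelian subgroups, and then to address the harder direction $h(G) \leq \rk(G)$ by a constructive inductive scheme.

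For the lower bound, suppose $G$ acts freely on $\sph$ and pick a prime $p$ together with a subgroup $(\mathbb{Z}_p)^r \leq G$ realizing the rank. The restricted action is still free, so the problem reduces to the elementary abelian case. Here one invokes the preceding conjecture on $(\mathbb{Z}_p)^r$-actions, which is now established in many settings (Carlsson for homologically trivial actions, Adem-Browder for equidimensional products with at most finitely many exceptional dimensions, Yal\c{c}in for $S^1$-products, Hanke in the stable range $p>3\sum n_i$). Any genuinely new input into that conjecture directly feeds the lower bound for Benson-Carlson, so this direction is essentially equivalent to the elementary abelian case.

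For the upper bound I would induct on $\rk(G)$. When $\rk(G)=1$ the group has $p$-periodic cohomology at every prime, and by the classical work of Swan and Madsen-Thomas-Wall on the topological space form problem one obtains a free $G$-action on a single sphere, settling the base case. For higher rank one seeks a short exact sequence $1 \to N \to G \to Q \to 1$ with $\rk(N)<\rk(G)$ and $Q$ enjoying periodic cohomology; an inductively constructed free $N$-action on a product of $\rk(N)$ spheres should then extend, via a linear sphere-bundle construction associated to a suitable projective $kG$-module, to a free $G$-action on a product of $\rk(G)$ spheres. Alternatively one can try to glue fixed-point-free representations of Sylow subgroups into a global sphere bundle in the spirit of Adem-Smith and Hambleton-Yal\c{c}in.

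The main obstacle lies squarely in this construction step at high rank: there is no uniform recipe for producing a normal subgroup of strictly smaller rank with a periodic-cohomology quotient, and even when such an $N \lhd G$ exists the extension need not split as a tower of free sphere bundles, because the requisite bundles carry genuine cohomological obstructions. This is precisely why the Benson-Carlson conjecture remains open beyond cases where either $\rk(G)\leq 2$ or the cohomology ring $H^*(G;\mathbb{F}_p)$ is Cohen-Macaulay. Any serious attack would have to combine fresh input on the spherical space-form problem with new obstruction theory for extending equivariant sphere bundles over non-split extensions.
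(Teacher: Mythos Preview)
The statement you are attempting to prove is labelled in the paper as a \emph{conjecture}, not a theorem; the paper offers no proof whatsoever and simply records it as the Benson--Carlson conjecture.  So there is nothing on the paper's side to compare against.

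What you have written is not a proof but a research outline, and you say so yourself in the last paragraph.  Two concrete gaps are worth naming.  First, your lower bound $h(G)\geq\rk(G)$ is reduced to the assertion that $(\mathbb{Z}_p)^r$ acting freely on $\sph$ forces $r\le k$.  That is precisely Conjecture~1.2 of the paper, which is open in general; the partial results you list (Carlsson, Adem--Browder, Yal\c{c}in, Hanke, Okutan--Yal\c{c}in) cover many cases but not all, so even this ``easy'' direction is not established.  Second, for the upper bound you propose an induction that requires, at each step, a normal subgroup $N\lhd G$ of strictly smaller rank with periodic-cohomology quotient and an extension of the $N$-action to $G$ via a sphere bundle.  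Neither ingredient is available in general: arbitrary finite groups need not admit such filtrations, and when they do the bundle obstructions you mention are genuine and unresolved.  You acknowledge both points explicitly, so your text is an accurate status report on an open problem rather than a proof proposal, and should be read as such.
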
 
Note that, $h(G)$ is well-defined since a result of Oliver \cite{oliver} states that every finite group acts freely on some products of spheres.\par
For products of even-dimensional spheres, Cusick \cite{ cusick-even-sphere, cusick-free-action-even-sphere}  proved that if a finite group $G$ acts freely on $S^{2n_1} \times \cdots \times S ^{2n_k}$ such that the induced action on $ H_*(X;\mathbb{Z}_2) $ is trivial, then $G\cong (\z2)^r$ for some $r\le k$.\par
Though an extensive amount of work has been done for spheres and their products, the free rank of many other interesting spaces is still unknown. A few results are known such as for products of lens spaces and for products of projective spaces. For lens spaces, Allday \cite[Conjecture 5.2]{allday} conjectured that 
$$ \textrm{frk}_p \,{ \big(L_p^{2n_{1}-1} \times \cdots \times L_p^{2n_{k}-1}\big)}=k .$$
Yal\c{c}in \cite{yalcin-lens} proved the equidimensional case of the above conjecture.\par
For products of real proective spaces, Cusick  \cite{cusick-realprojective} conjectured that if $( \mathbb{Z}_2)^r $ acts freely and mod 2 cohomologically trivially on $ \mathbb{R}P^{n_1}\times\cdots\times\mathbb{R}P^{n_k}, $ then $ r\leq \eta(n_1)+\cdots+\eta(n_k),$ where the function $ \eta $ on natural number is defined by \begin{equation*}
\eta(n)= \left\{   \begin{array}{ll}
0 & \textrm{if $n$ is even},\\
1 & \textrm{if $n\equiv1\!\!\!\!\pmod 4$, }\\
2 & \textrm{if $n\equiv3\!\!\!\!\pmod 4$. }\\
\end{array} \right.
\end{equation*}
He proved the conjecture when $ n_i\not\equiv3\!\pmod4  $ for all $ i $. Later, Yal\c{c}in \cite[Theorem 8.3]{yalcin-group-extensions} proved the conjecture when $ n_i $ is odd for each $ 0\leq i\leq k $. The general case is still open.\par
For products of complex projective spaces, Cusick \cite[Theorem 4.13]{cusick-euler} proved that if a finite group $G$ acts freely on $ (\mathbb{C} P^m)^k $, then $G$ is a 2-group of order atmost $ 2^k $ and the exponent of $ G $ cannot exceed $ 2k $.\par 
Viewing the product of two projective spaces as a trivial bundle, it is interesting to determine the free rank of symmetry of twisted projective space bundles over  projective spaces. Milnor and Dold manifolds are fundamental examples of such spaces.  
It is well-known that Dold and Milnor manifolds give
generators for the unoriented cobordism algebra of smooth manifolds. Therefore, determining various invariants of these manifolds is an interesting problem. The problem of determining the free 2-rank of symmetry of products of  Milnor manifolds has been investigated in \cite{msinghmilnor}, wherein some bounds have been obtained for the same.\par
In this paper, we investigate the free rank problem for products of Dold manifolds. A Dold manifold is the quotient of  $ \scp $ by a free involution (see Definition \ref{definition}) and it is denoted by $ P(m, n) $. To compute the free rank of ${\displaystyle \prod_{i=1}^{k} P(2m_i,n_i)}$, we first determine the free rank of ${\displaystyle \prod_{i=1}^{k} S^{2m_i}\times\mathbb{C}P^{n_i}}$. Since  a Dold manifold is a twisted complex projective space bundle over a real projective space, our arguments also give the free rank of  ${\displaystyle \prod_{i=1}^{k} \mathbb{R}P^{2m_i}\times\mathbb{C}P^{n_i}}$. To state our main results, we first define a function $ \mu:\mathbb{N} \to \{0,1\} $  by
\begin{equation}\label{mu}
\mu(n)= \left\{   \begin{array}{ll}
1 & \textrm{if $n$ is odd},\\
0 & \textrm{if $n$ is even.}\\
\end{array} \right.
\end{equation}\vspace{1mm}
\begin{theorem}\label{free-rank-univ-cover-introduction}
	Suppose $G$ is a finite group acting freely and mod 2 cohomologically trivially on a finite-dimensional CW-complex $X$ homotopy equivalent to  ${\displaystyle \prod_{i=1}^{k} S^{2m_i}\times\mathbb{C}P^{n_i}}$.  Then $G\cong (\mathbb{Z}_2)^l$ for some $ l $ satisfying
$$ l\leq	k_0 + \mu(n_1)+\mu(n_2)+\cdots+\mu(n_k),$$
where $ k_0 $ is  number of the positive-dimensional spheres in the product.
\end{theorem}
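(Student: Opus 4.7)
The plan is to establish the theorem in three stages of increasing technical depth. Stage 1 (reduction to a 2-group): For any odd prime $p$ dividing $|G|$, pick $H\cong\mathbb{Z}/p\subseteq G$. Since $|H|$ is invertible modulo $2$ and $H$ acts trivially on $H^*(X;\mathbb{Z}_2)$, the standard transfer argument gives an isomorphism $\pi^*\colon H^*(X/H;\mathbb{Z}_2)\xrightarrow{\cong} H^*(X;\mathbb{Z}_2)$, whence $\chi(X/H)=\chi(X)$. But freeness also gives $\chi(X)=p\,\chi(X/H)$, forcing $(p-1)\chi(X)=0$ --- a contradiction since $\chi(X)=2^k\prod_i(n_i+1)>0$. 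Hence $G$ is a $2$-group.

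Stage 2 (elementary abelian): To show $G$ has exponent $2$, I will rule out any subgroup $H=\mathbb{Z}/4\subseteq G$. Consider the Leray-Serre spectral sequence of the Borel fibration $X\to X_H\to BH$ with $\mathbb{Z}_2$ coefficients. Triviality of the action gives $E_2=\mathbb{Z}_2[x,y]/(y^2)\otimes H^*(X;\mathbb{Z}_2)$, where $|y|=1$, $|x|=2$, and $\operatorname{Sq}^1 y = x$ (reflecting the non-split extension $0\to\mathbb{Z}/2\to\mathbb{Z}/4\to\mathbb{Z}/2\to 0$). Each primitive fiber generator $a_i\in H^{2m_i}$ and $b_i\in H^2$ is the mod $2$ reduction of an integral class, so $\operatorname{Sq}^1 a_i=0=\operatorname{Sq}^1 b_i$. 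By Kudo's transgression theorem, $\operatorname{Sq}^1$ commutes with the transgression $\tau$; writing $\tau(a_i)=\lambda\,yx^{m_i}$ gives $0=\operatorname{Sq}^1\tau(a_i)=\lambda\,x^{m_i+1}$, and nonvanishing of $x^{m_i+1}$ at the relevant page forces $\lambda=0$. The same argument (using $\operatorname{Sq}^1(yx)=x^2\neq 0$) yields $\tau(b_i)=0$. Thus all indecomposable fiber generators are permanent cycles, and by Leibniz so is every fiber class; consequently $x^n$ survives for every $n$, making $E_\infty$ infinite-dimensional --- contradicting finite-dimensionality of $H^*(X/H;\mathbb{Z}_2)$. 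So $G$ has exponent $2$ and is elementary abelian: $G\cong(\mathbb{Z}/2)^l$.

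Stage 3 (bounding $l$): Apply the Borel spectral sequence to $G=(\mathbb{Z}/2)^l$, so $E_2=\mathbb{Z}_2[t_1,\ldots,t_l]\otimes H^*(X;\mathbb{Z}_2)$, converging to the finite-dimensional $H^*(X/G;\mathbb{Z}_2)$. I analyze which fiber generators can transgress. For $b_i$ with $n_i$ even, applying Leibniz to $b_i^{n_i+1}=0$ yields $(n_i+1)b_i^{n_i}d_3 b_i=0$; since $(n_i+1)\equiv 1\pmod 2$ and $b_i^{n_i}$ is a non-zerodivisor in $E_3$, we get $d_3 b_i=0$. Higher differentials on $b_i$ vanish for degree reasons, so $b_i$ is a permanent cycle. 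For $n_i$ odd the Leibniz constraint is vacuous, so $b_i$ may transgress; and each $a_i$ with $m_i>0$ may transgress at $d_{2m_i+1}$. Hence at most $k_0+\sum_i\mu(n_i)$ fiber generators transgress. Since the polynomial ring $\mathbb{Z}_2[t_1,\ldots,t_l]$ has Krull dimension $l$ while $E_\infty$ must have Krull dimension $0$, we need at least $l$ algebraically independent transgression relations, giving $l\leq k_0+\sum_i\mu(n_i)$.

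The main obstacle will be the Kudo/Leibniz argument in Stage 2: one must inductively verify across pages that $x^{m_i+1}$ really is nonzero at the transgression page $2m_i+1$, and that no higher differential involving decomposable fiber classes can kill $x^n$. A subtler technical issue in Stage 3 is the Krull-dimension counting: the heuristic ``one transgression kills one dimension'' needs a precise Quillen-type justification showing that the transgression relations impose enough algebraically independent polynomial constraints to cut the Krull dimension of $E_\infty$ to $0$.
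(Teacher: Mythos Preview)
Your Stage~1 is correct and in fact cleaner than the paper's route (the paper first upgrades mod~2 triviality to integral triviality via a matrix argument, then applies Lefschetz; your transfer/Euler-characteristic argument is more direct).

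Stage~2, however, contains a genuine error that breaks the proof. You assert $\mathrm{Sq}^1 y = x$ in $H^*(B\mathbb{Z}_4;\mathbb{Z}_2)\cong \mathbb{Z}_2[x]\otimes\Lambda(y)$, but this is false: for any degree-one class, $\mathrm{Sq}^1 y = y^2$, and here $y^2=0$. In fact $\mathrm{Sq}^1$ vanishes identically on $H^*(B\mathbb{Z}_4;\mathbb{Z}_2)$ (the non-splitness of $0\to\mathbb{Z}_2\to\mathbb{Z}_4\to\mathbb{Z}_2\to 0$ is detected by the \emph{secondary} Bockstein $\beta_2$, not by $\mathrm{Sq}^1$). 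Consequently Kudo's theorem yields the tautology $0=0$ and places no constraint on the transgressions $\tau(a_i)$ or $\tau(b_i)$; your conclusion that all fiber generators are permanent cycles does not follow. The paper's argument here is quite different: it restricts the free $\mathbb{Z}_4$-action to the subgroup $\mathbb{Z}_2$ and compares the two Borel spectral sequences via naturality. Since $H^*(X;\mathbb{Z}_2)$ is concentrated in even degrees, every nonzero differential lands in an odd column; but $i^*\colon H^*(B\mathbb{Z}_4;\mathbb{Z}_2)\to H^*(B\mathbb{Z}_2;\mathbb{Z}_2)$ kills the odd-degree class $y$, hence kills all odd columns, forcing the $\mathbb{Z}_2$-spectral sequence to collapse---contradicting freeness.

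Your Stage~3 also diverges from the paper, and the gap you yourself flag is real: the slogan ``$N$ transgressions can reduce Krull dimension by at most $N$'' requires a genuine argument (e.g.\ controlling $E_\infty^{*,0}$ as a quotient of $H^*(BG)$ by an ideal with at most $N$ generators), and the Euler-characteristic shortcut fails precisely when some $n_i\equiv 3\pmod 4$. The paper avoids this entirely by working over $\mathbb{Z}$: mod~2 triviality plus the structure of $\mathrm{Aut}\,H^*(X;\mathbb{Z})$ forces each $g\in G$ to act by $a_i\mapsto \pm a_i$, $b_i\mapsto \pm b_i$, and the explicit Lefschetz-number formula
\[
\tau(g)=\prod_{i=1}^k\bigl(1+\xi_i+\cdots+\xi_i^{\,n_i}\bigr)(1+\lambda_i)
\]
then shows $\tau(g)=0$ is possible only by flipping some $b_i$ or some $a_j$ with $n_j$ odd, giving the embedding $G\hookrightarrow (\mathbb{Z}_2)^{k_0+\sum\mu(n_i)}$ directly.
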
\medskip
\begin{theorem}\label{free-rank-dold-introduction}
	Suppose $G$ is a finite group acting freely and mod 2 cohomologically trivially on a finite-dimensional CW-complex $X$ homotopy equivalent to  ${\displaystyle \prod_{i=1}^{k} P(2m_i,n_i)}$. Then $G\cong (\mathbb{Z}_2)^l$ for some $l$ satisfying
	$$ l\leq \mu(n_1)+\mu(n_2)+\cdots+\mu(n_k).$$
\end{theorem}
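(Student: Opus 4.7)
The strategy is to deduce Theorem~\ref{free-rank-dold-introduction} from Theorem~\ref{free-rank-univ-cover-introduction} by lifting the $G$-action to the canonical $2^{k}$-fold covering
\[
\pi\colon \tilde X=\prod_{i=1}^{k}S^{2m_i}\times\mathbb{C}P^{n_i}\longrightarrow X\simeq\prod_{i=1}^{k}P(2m_i,n_i),
\]
whose deck group $K\cong(\mathbb{Z}_2)^{k}$ is generated by the product of the free involutions defining the Dold manifolds. Since the $G$-action on $X$ is mod $2$ cohomologically trivial, $G$ acts trivially on $H_{1}(X;\mathbb{Z}_{2})$ by universal coefficients, hence on $K$, because the characteristic homomorphism $\pi_{1}(X)\to K$ classifying $\pi$ factors through $H_{1}(X;\mathbb{Z}_{2})$. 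Standard covering-space theory therefore produces a lift of the $G$-action to a free action of a group $\tilde G$ on $\tilde X$ fitting in a central extension
\[
1\longrightarrow K\longrightarrow \tilde G\longrightarrow G\longrightarrow 1,
\]
in which $\tilde G$ acts freely because $K$ acts freely as the deck group and any element outside $K$ projects to a non-trivial element of $G$ acting freely on $X$.

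The crux is to verify that $\tilde G$ acts mod $2$ cohomologically trivially on $\tilde X$, so that Theorem~\ref{free-rank-univ-cover-introduction} applies. The deck involutions act trivially on $H^{*}(\tilde X;\mathbb{Z}_{2})$, because the antipodal map on $S^{2m_i}$ has degree $-1\equiv 1\pmod 2$ and complex conjugation on $\mathbb{C}P^{n_i}$ induces the identity on mod $2$ cohomology. Consequently, the $\tilde G$-action on $H^{*}(\tilde X;\mathbb{Z}_{2})$ descends to a $G$-action, and for any $g\in G$ and any lift $\tilde g\in\tilde G$, naturality of $\pi^{*}$ together with $g^{*}=\mathrm{id}$ on $H^{*}(X;\mathbb{Z}_{2})$ gives $\tilde g^{*}\circ\pi^{*}=\pi^{*}$. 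Using Dold's description $H^{*}(P(2m_i,n_i);\mathbb{Z}_{2})=\mathbb{Z}_{2}[c_i,d_i]/(c_i^{2m_i+1},d_i^{n_i+1})$ together with $\pi^{*}c_i=0$ (since $H^{1}(S^{2m_i};\mathbb{Z}_{2})=0$) and $\pi^{*}d_i=b_i$, the image of $\pi^{*}$ is the truncated polynomial subring $\mathbb{Z}_{2}[b_1,\ldots,b_k]/(b_1^{n_1+1},\ldots,b_k^{n_k+1})$, on which $\tilde g^{*}$ is already the identity. The remaining task, which I expect to be the main obstacle, is to show that $\tilde g^{*}$ also fixes each sphere class $a_i\in H^{2m_i}(S^{2m_i};\mathbb{Z}_{2})$. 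Writing $\tilde g^{*}a_i=a_i+\varepsilon_i$ in the complementary span, one aims to exploit the relations $a_i^{2}=0$, commutation with Steenrod squares, and the naturality of the transfer for $\pi$ to force $\varepsilon_i=0$.

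Granting this triviality, Theorem~\ref{free-rank-univ-cover-introduction} applied to $\tilde G\curvearrowright\tilde X$ yields $\tilde G\cong(\mathbb{Z}_{2})^{l'}$ with
\[
l'\leq k_{0}+\sum_{i=1}^{k}\mu(n_i)=k+\sum_{i=1}^{k}\mu(n_i),
\]
since every $S^{2m_i}$ is positive-dimensional and so $k_{0}=k$. Because $G\cong\tilde G/K$ is a quotient of the elementary abelian $2$-group $\tilde G$ by its rank-$k$ subgroup $K$, we conclude $G\cong(\mathbb{Z}_{2})^{l}$ with $l=l'-k\leq\mu(n_{1})+\cdots+\mu(n_{k})$, which is exactly the bound asserted.
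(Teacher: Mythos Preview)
Your strategy---lift the $G$-action through the deck group $K\cong(\mathbb{Z}_2)^k$ to a free $\tilde G$-action on $\prod_i S^{2m_i}\times\mathbb{C}P^{n_i}$ and invoke Theorem~\ref{free-rank-univ-cover-introduction}---is exactly the paper's approach. The paper phrases it via $\pi_1(X/G)$, which sits in the same central extension $1\to(\mathbb{Z}_2)^k\to\pi_1(X/G)\to G\to 1$ and acts freely on the universal cover; this is your $\tilde G$.

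The step you single out as the ``main obstacle'' is a genuine gap, and the tools you list do not close it. Theorem~\ref{free-rank-univ-cover-introduction} requires the $\tilde G$-action on $H^*(\tilde X;\mathbb{Z}_2)$ to be trivial, and you have only established triviality on the subring $\pi^*H^*(X;\mathbb{Z}_2)=\mathbb{Z}_2[b_1,\dots,b_k]/(b_j^{n_j+1})$. Steenrod squares say nothing new about the sphere classes, since $Sq^j a_i=0$ for $0<j<2m_i$. The transfer, run through the intermediate double cover obtained by quotienting $\tilde X$ by all deck involutions except the $i$-th, does force $\tilde g^*a_i-a_i\in\pi^*H^{2m_i}(X;\mathbb{Z}_2)$, so that $\tilde g^*a_i=a_i+\varepsilon_i$ with $\varepsilon_i$ a polynomial in the $b_j$; but then the only remaining constraints are $\varepsilon_i^2=0$ and $(\tilde g^*)^2=\mathrm{id}$, both of which are satisfied by, say, $\varepsilon_i=b_j^{\,m_i}$ whenever $m_i\le n_j<2m_i$. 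So nothing in your outline excludes a nontrivial $\tilde g^*$. For what it is worth, the paper's own proof invokes Theorem~\ref{free-rank-univ-cover-introduction} for the lifted action without checking this hypothesis either, so you are not missing an argument that is spelled out there.

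One minor separate point: you conclude $k_0=k$ on the grounds that every $S^{2m_i}$ is positive-dimensional, but the statement allows $m_i=0$ (indeed $P(0,n_i)=\mathbb{C}P^{n_i}$), in which case that factor is simply connected and your deck group $K$ has rank strictly smaller than $k$.
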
\medskip
Since $ P(0, n)=\mathbb{C}P^n $, by taking all $ m_i =0$, we retrive the result of Cusick \cite[Theorem 4.15]{cusick-euler}.\vspace{1mm}\par
For an arbitrary free finite $ G $-CW-complex, there is a more general conjecture  due to Carlsson \cite[Conjecture I.3]{carlsson-conjecture}:
\begin{conjecture}\label{conjecture}
Suppose  $G=(\mathbb{Z}_p)^k$ and $ X $ is a finite free $G$-complex. Then $$2^k\leq \sum_{i=0}^{\dim X} \dim_{\mathbb{Z}_p}\, H_i(X;\mathbb{Z}_p).$$
\end{conjecture}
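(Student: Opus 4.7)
The displayed statement is Carlsson's well-known conjecture, which has been open in its full generality for over four decades, so what follows must be read as a strategy rather than a complete proof. The natural first step is the standard algebraic reduction: because $G$ acts freely on $X$, the cellular chain complex $C_*(X;\mathbb{Z}_p)$ is a bounded complex of finitely generated \emph{free} modules over the group algebra $R=\mathbb{Z}_p[G]$, and after the change of variables $s_i=t_i-1$ we have $R\cong \mathbb{Z}_p[s_1,\dots,s_k]/(s_1^p,\dots,s_k^p)$, a commutative local ring with residue field $\mathbb{Z}_p$. The conjecture becomes the purely algebraic assertion that every bounded complex of finitely generated free $R$-modules with nonzero total $\mathbb{Z}_p$-homology of finite dimension satisfies $\sum_i\dim_{\mathbb{Z}_p}H_i(C_*)\geq 2^k$.

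With the problem algebraised, I would bring in two complementary tools. First, the Borel fibration $X\to EG\times_G X\to BG$ and its Serre spectral sequence, which for classes of spaces where equivariant formality or good collapse can be established (for instance the products of Dold manifolds that are the focus of this paper) pins down $H^*_G(X;\mathbb{Z}_p)$ as an $H^*(BG;\mathbb{Z}_p)$-module and then yields divisibility constraints on Poincar\'e series. Second, the variety-theoretic machinery of Quillen and Avrunin--Scott: one restricts along each cyclic shifted subgroup of $R$ and, via Dade's lemma, reduces to the situation where $C_*$ is exact on every rank-one shifted subgroup; one then attempts to produce $2^k$ independent homology classes by a deformation or specialisation argument. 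For $X=\prod S^{n_i}$ with $n_i$ odd, this framework, combined with the multiplicative structure of $H^*(BG;\mathbb{Z}_p)$, recovers the known partial results.

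The main obstacle is exactly what has stymied the conjecture so far: there is no known universal invariant of a free $R$-complex that is simultaneously sensitive to the full rank $k$ and controllable by topological hypotheses on $X$. A successful proof would most likely require either a new module-theoretic lower bound (perhaps via Castelnuovo--Mumford regularity or the DG-algebra techniques of Avramov--Buchweitz--Iyengar) or a purely topological collapse result forcing $H^*_G(X;\mathbb{Z}_p)$ to contain a faithful image of the Koszul complex on $H^*(BG;\mathbb{Z}_p)$. I would attempt the second route first in the context of this paper, since the explicit $\mathbb{Z}_2$-cohomology calculations for products of Dold manifolds via the Leray--Serre spectral sequence used to establish Theorems~\ref{free-rank-univ-cover-introduction} and \ref{free-rank-dold-introduction} already furnish a concrete, if restricted, testing ground; a full resolution of the conjecture, however, appears to require genuinely new ideas beyond the scope of what I can sketch here.
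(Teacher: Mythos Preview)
You have correctly identified that the displayed statement is Carlsson's conjecture, which remains open in full generality; accordingly, the paper does \emph{not} prove it either. It is stated as Conjecture~\ref{conjecture} and then immediately verified only for the particular spaces $X\simeq\prod_{i=1}^k S^{2m_i}\times\mathbb{C}P^{n_i}$ and $X\simeq\prod_{i=1}^k P(2m_i,n_i)$ under mod~$2$ cohomologically trivial actions, as a direct consequence of the rank bounds in Theorems~\ref{free-rank-univ-cover-introduction} and~\ref{free-rank-dold-introduction} together with an elementary count of $\dim_{\mathbb{Z}_2}H_i(X;\mathbb{Z}_2)$.

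Your proposal, by contrast, sketches a general attack on the conjecture (algebraic reduction to free complexes over $\mathbb{Z}_p[s_1,\dots,s_k]/(s_i^p)$, shifted subgroups, Borel spectral sequence, etc.). This is a reasonable survey of the known machinery, but it goes well beyond what the paper does or claims. If the task is to reproduce the paper's treatment of this statement, the appropriate response is simply: the paper does not prove the conjecture, it only observes that the bounds established in its main theorems imply the conjectured inequality for the specific families of spaces under consideration.
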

In particular, the above conjecture is  applicable for CW-complexes. A direct check shows that 
\begin{displaymath}
\sum_{i=0}^{\dim X} \dim_{\mathbb{Z}_p}\, H_i(X;\mathbb{Z}_p)\geq \left\{   \begin{array}{ll}
2^{k_0 + \mu(n_1)+\mu(n_2)+\cdots+\mu(n_k)}   & \textrm{for  $X=\displaystyle \prod_{i=1}^{k} S^{2m_i}\times\mathbb{C}P^{n_i}$},\\
2^{ \mu(n_1)+\mu(n_2)+\cdots+\mu(n_k)} & \textrm{for  $X=\displaystyle \prod_{i=1}^{k} P(2m_i,n_i)$},\\

\end{array} \right.
\end{displaymath}
for mod 2 cohomologically trivial actions. Thus, Conjecture \ref{conjecture} is verified for our spaces under these conditions.\par
We also consider the problem of determining the possible mod 2 cohomology algebra of orbit spaces of free involutions on Dold manifolds. Some results related to the cohomology algebra of orbit spaces can be found in \cite{pinka,msinghprojective,msinghlens}. In a recent paper \cite[Theorem 1]{morita}, Morita et al. determined the possible mod 2 cohomology algebra of orbit spaces of free involutions on Dold manifolds $P(1,n)$, where $n$ is an odd natural number. In this paper, we prove the general case using slightly different techniques.
\begin{theorem}
Let $G=\mathbb{Z}_2$ act freely on a finite-dimensional CW-complex homotopy equivalent to the Dold manifold $P(m,n)$ with a trivial action on mod 2 cohomology. Then $H^*\big(P(m,n)/G; \mathbb{Z}_2\big)$ is isomorphic to one of the following graded algebras:
\begin{enumerate}
\item  $\mathbb{Z}_2[x,y,z]/ \big\langle x^2, y^{\frac{m+1}{2}}, z^{n+1} \big\rangle,$\\
where $\deg(x)=1$, $\deg(y)=2$, $\deg(z)=2$ and $m$ is odd;
\item  $\mathbb{Z}_2[{x},{y},{z}]/\big\langle {f}, {g}, {z}^{\frac{n+1}{2}}+h\big\rangle,$\\
where $\deg({x})=1$, $\deg({y})=1$, $\deg({z})=4$, $n$ is odd,
$$f= \big(x^{m+1}+ \alpha_1 x^my+\alpha_2x^{m-1}y^2\big) \hspace{2mm} \textrm{and}\hspace{2mm} g= \big(y^3+ \beta_1 xy^2+\beta_2x^2y\big) ,$$ where $\alpha{_i}$, $\beta_j\in \z2$, and $  h \in \z2[{x}, {y}, {z }]$ is either a zero polynomial or it is a homogeneous polynomial of degree $ 2n+2 $ with the highest power of $ {z} $ is less than or equal to $ \frac{n-1}{2} $.
\end{enumerate}
\end{theorem}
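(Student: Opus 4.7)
The plan is to study the Leray-Serre spectral sequence of the Borel fibration $P(m,n)\hookrightarrow X_G\to BG$ with $G=\mathbb{Z}_2$. Since the action is free, $X_G\simeq X/G$, and hence $H^*(X/G;\mathbb{Z}_2)\cong H^*(X_G;\mathbb{Z}_2)$. Recall $H^*(BG;\mathbb{Z}_2)=\mathbb{Z}_2[t]$ with $\deg t=1$, and by Dold's computation,
$$H^*(P(m,n);\mathbb{Z}_2)=\mathbb{Z}_2[c,d]/\langle c^{m+1},d^{n+1}\rangle,\qquad \deg c=1,\ \deg d=2.$$
Triviality of the $G$-action on mod $2$ cohomology yields $E_2^{p,q}\cong H^p(BG;\mathbb{Z}_2)\otimes H^q(P(m,n);\mathbb{Z}_2)$, and the problem reduces to determining the possible $E_\infty$-pages together with the extensions lifting them to $H^*(X/G;\mathbb{Z}_2)$.

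For bidegree reasons the only possibly non-trivial differentials on the fiber generators are $d_2(c)\in\{0,t^2\}$, $d_2(d)\in\{0,t^2c\}$, and (assuming $d_2(d)=0$) $d_3(d)\in\{0,t^3\}$. Applying the Leibniz rule to $c^{m+1}=0$ and $d^{n+1}=0$ forces $m$ odd whenever $d_2(c)=t^2$, and $n$ odd whenever $d_2(d)=t^2c$ or $d_3(d)=t^3$. The identity $d_2\circ d_2=0$ applied to $d$ rules out the simultaneous occurrence $d_2(c)=t^2$ and $d_2(d)=t^2c$, since then $d_2(t^2c)=t^4\neq 0$. Finite-dimensionality of $H^*(X/G;\mathbb{Z}_2)$ forces some differential to kill all sufficiently high powers of $t$, and after these eliminations exactly two viable configurations remain: (A)~$d_2(c)=t^2$, $d_2(d)=0$ (requiring $m$ odd); and (B)~$d_2(c)=0$, $d_2(d)=0$, $d_3(d)=t^3$ (requiring $n$ odd).

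In Configuration~(A), $d_2(t^{k-2}c)=t^k$ kills $t^k$ for all $k\ge 2$, so the spectral sequence collapses at $E_3=E_\infty$ with surviving generators $t$ (satisfying $t^2=0$), $c^2$ (satisfying $(c^2)^{(m+1)/2}=c^{m+1}=0$), and $d$ (satisfying $d^{n+1}=0$). The filtration admits a clean tensor-product description, so the extension problem is trivial and setting $x=t$, $y=c^2$, $z=d$ yields algebra~(1). In Configuration~(B), $d_3(t^kd)=t^{k+3}$ kills $t^k$ for $k\ge 3$ and kills all odd powers of $d$; the surviving generators of $E_\infty$ are $t$ (with $t^3=0$), $c$ (with $c^{m+1}=0$), and $d^2$ (with $(d^2)^{(n+1)/2}=d^{n+1}=0$), matching the degrees prescribed in~(2). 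One checks that $E_4=E_\infty$, since higher differentials from these three generators land in bidegrees that have already been annihilated.

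The main obstacle is the extension problem in Configuration~(B). The relations $t^3=0$, $c^{m+1}=0$, and $d^{n+1}=0$ hold on $E_\infty$ only up to classes of lower filtration, and lifts $x,y,z\in H^*(X/G;\mathbb{Z}_2)$ of $t,c,d^2$ inherit corrected relations. A careful bidegree analysis pins down the shape of each correction. In total degree $m+1$, the only permanent cocycles of strictly lower filtration are multiples of $x^my$ and $x^{m-1}y^2$; this determines $f=x^{m+1}+\alpha_1x^my+\alpha_2x^{m-1}y^2$. In total degree $3$, an analogous argument leaves only $xy^2$ and $x^2y$ as possible corrections to $y^3$, giving $g=y^3+\beta_1xy^2+\beta_2x^2y$. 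In total degree $2n+2$, any correction to $z^{(n+1)/2}$ must involve monomials whose $z$-exponent is strictly smaller than $(n+1)/2$, producing a homogeneous polynomial $h$ of degree $2n+2$ with $z$-exponent at most $(n-1)/2$. The coefficients $\alpha_i,\beta_j$ and the polynomial $h$ are the extension invariants left undetermined by the purely homological data, yielding exactly the presentation in~(2).
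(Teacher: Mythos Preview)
Your approach differs substantially from the paper's, and the extension analysis in Configuration~(B) contains a genuine gap.

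The paper does \emph{not} handle case~(2) via the fibration $P(m,n)\to M\to B_{\mathbb{Z}_2}$. Instead it splits according to $\pi_1(M)\in\{\mathbb{Z}_4,\mathbb{Z}_2\oplus\mathbb{Z}_2\}$ and, when $\pi_1(M)=\mathbb{Z}_2\oplus\mathbb{Z}_2$, passes to the universal cover and studies $S^m\times\mathbb{C}P^n\to M\to B_{\mathbb{Z}_2\oplus\mathbb{Z}_2}$. In that spectral sequence the relations $f$ and $g$ arise as \emph{transgressions} of the fibre generators $c\in H^m(S^m)$ and $d\in H^2(\mathbb{C}P^n)$, so they land in $H^*(B_{\mathbb{Z}_2\oplus\mathbb{Z}_2})=\mathbb{Z}_2[x,y]$ and automatically involve no $z$. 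A comparison with the known fibration $S^m\times\mathbb{C}P^n\to P(m,n)\to B_{\mathbb{Z}_2}$ then forces the $x^{m+1}$ term in $f$, and reduction modulo $g$ yields the stated form.

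In your Configuration~(B), with $x$ lifting $t$ and $y$ lifting $c$, the bottom-edge argument gives $x^3=0$ on the nose (because $E_\infty^{3,0}=0$ and $F^4H^3=0$); there is no room for your $\beta_1,\beta_2$, and indeed with your conventions the monomials $x^my$, $x^{m-1}y^2$ vanish for $m\ge 3$. The relation that actually needs correcting is $y^{m+1}$: it lies in $F^1H^{m+1}$, whose graded pieces $E_\infty^{1,m}$ and $E_\infty^{2,m-1}$ are spanned by $t\,c^j(d^2)^l$ with $j+4l=m$ and $t^2c^j(d^2)^l$ with $j+4l=m-1$. For $m\ge 4$ these contain classes with $l>0$, so the extension corrections to $y^{m+1}$ generally involve $z$. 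Your assertion that the only corrections are $x^my$ and $x^{m-1}y^2$ is therefore false, and the presentation you write down is not the one in the theorem; nor have you argued that your presentation is isomorphic to one on the list. This is exactly the difficulty the paper's passage to the $\mathbb{Z}_2\oplus\mathbb{Z}_2$ spectral sequence is designed to avoid.

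There is also a missing sub-case in your differential analysis: $d_2(c)=0$ with $d_2(d)=t^2c$ is not excluded by $d_2\circ d_2=0$, and in that pattern $t^3$ is not hit at $E_3$ (the only candidate $c^2$ has $d_3(c^2)=0$ by the Leibniz rule), so $E_\infty^{3,0}\ne 0$ and the shape of $E_\infty$ differs from your Configuration~(B). This sub-case still produces an algebra in family~(2), but it must be treated separately.
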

This paper is organized as follows. In Section \ref{sec2}, we recall the definition and cohomology of Dold manifolds. In Section \ref{sec3}, we construct free involutions on these manifolds.  In Section \ref{sec4}, we study the free rank of symmetry of products of these manifolds. In Section \ref{sec5}, we determine the possible mod 2 cohomology algebra of orbit spaces of free involutions on Dold manifolds. In Section \ref{sec6}, we give an application to $ \z2 $-equivariant maps.\par 
Throughout the paper, by $\mathbb{Z}_p$ we denote the cyclic group of order $p$. For topological spaces $ X $ and $ Y $, $ X\simeq Y $ means that they are homotopy equivalent.

\bigskip

\section{Definition and cohomology of Dold manifolds}\label{sec2}
\begin{definition}\label{definition} Let $m, n$ be non-negative integers such that $m+n>0$.  The Dold manifold $P(m,n)$  is a closed connected smooth manifold of dimension $m+2n$, obtained as the quotient of $S^m \times \mathbb{C}P^n$ under the free involution  
$$\Big((x_0, x_1, \dots, x_m),\,[z_0, z_1, \dots, z_n] \Big)  \mapsto \Big((-x_0, -x_1, \dots, -x_m),\,[\overline{z}_0, \overline{z}_1, \dots, \overline{z}_n] \Big).$$
\end{definition}
It was shown by Dold \cite{dold} that, for suitable values of $m$ and $n$, the cobordism classes of $P(m,n)$ serve as generators in odd degrees for the unoriented cobordism algebra of smooth manifolds.\par 
The projection $\scp \rightarrow S^m$ induces the map $$p:P(m,n)\rightarrow \rp m,$$ which is a locally trivial fiber bundle with fiber  $\cpn$.  It admits a cross-section $s: \rp m \rightarrow P(m,n)$ defined as $s([x])=\big[(x, \: [1,0,\dots,0])\big]$.  Consider the cohomology classes $a=p^*(x)\in H^1(P(m,n);\z2)$, where $x$ is the generator of $H^1(\rp m; \z2)$, and $b\in H^2(P(m,n);\z2)$, where $ b $ is characterized by the property that the restriction to a fiber is non-trivial and $s^*(b)=0$.
The classes $a\in H^1(P(m,n);\z2)$ and $b\in H^2(P(m,n);\z2)$ generate $H^*(P(m,n);\z2)$. In particular, Dold \cite{dold} proved that  $$H^*(P(m,n); \mathbb{Z}_2) \cong \mathbb{Z}_2[a,b]/\big\langle a^{m+1}, b^{n+1} \big\rangle,$$ where $\deg (a)=1$ and $\deg (b) =2$.\par
Dold manifolds have been well-studied in the past. Some results related to group actions on these manifolds can be found in \cite{morita,peltier,ucci}. In a very recent work \cite{nath-sankaran}, Nath and Sankaran defined generalized  Dold manifolds and obtained results on stable  parallelizability of such manifolds.

\bigskip

\section{Free involutions on  Dold manifolds}\label{sec3}
In this section, we give some examples of free involutions on Dold manifolds. By Euler characteristic argument one can see that a Dold manifold $ P(m, n) $ admits a free involution if at least one of $ m $ and $ n $ is odd.\par
(i) When $n$ is odd: Define $T_1:\mathbb{C}P^n \rightarrow \mathbb{C}P^n $ as the free involution given by $$ T_1 \Big([z_0, z_1, \dots, z_{n-1}, z_n ]\Big) = [-\overline{z}_1, \overline{z}_0, \dots, -\overline{z}_n, \overline{z}_{n-1}].$$
Then $$id \times T_1 : S^m \times \mathbb{C}P^n \rightarrow S^m \times \mathbb{C}P^n $$
gives a free involution on $S^m \times \mathbb{C}P^n$. Since $T_1$ commutes with conjugation, we get a free involution on $P(m,n)$ by passing to the quotient.\vspace{1mm} \par
(ii)\label{t2} When $m$ is odd: Let $T_2$ be the action on $S^m \times \cpn$ defined  by 
$$T_2\Big((x_0, x_1, \dots, x_{m-1}, x_{m}), \,[z_0, z_1, \dots, z_{n}]\Big) =\Big((-x_1, x_0, \dots, -x_{m}, x_{m-1}), \,[z_0, z_1, \dots, z_{n}]\Big).$$ Note that $T_2$ is free and it commutes with the action defining the Dold manifold. Hence, we get a free involution on $P(m,n)$ by passing to the quotient.\vspace{1mm} \par
(iii) When both $m$ and $n$ are odd: In this case, define the action  $T_3$ on $S^m \times \cpn$ by 
$$T_3\Big((x_0, x_1, \dots, x_{m-1}, x_{m}), \,[z_0, z_1, \dots, z_{n}]\Big) =\Big((-x_1, x_0, \dots, -x_{m}, x_{m-1}),\,[-\overline{z}_1, \overline{z}_0, \dots, -\overline{z}_n, \overline{z}_{n-1}]\Big).$$ This gives another free involution on $P(m,n)$ by passing to the quotient.

\bigskip

\section{Free rank of symmetry}\label{sec4}
In this section, we prove  some results regarding the free rank of symmetry of ${\displaystyle \prod_{i=1}^{k} P(2m_i,n_i)}$. For this we first determine the free rank of symmetry of its universal cover ${\displaystyle \prod_{i=1}^{k} S^{2m_i}\times\mathbb{C}P^{n_i}}$. 
 Our main computational tool is the Leray-Serre spectral sequence associated to the Borel fibration.  For details, we refer the reader to \cite{allday-puppe,mccleary}.\medskip
\begin{lemma}\label{lemma-zp-trivial-imply-z-trivial}
Let $ p $ be an odd prime. If\, $ \zp$ acts freely and mod 2 cohomologically trivially on a finite-dimensional CW-complex $X$ homotopy equivalent to ${\displaystyle \prod_{i=1}^{k} S^{2m_i}\times\mathbb{C}P^{n_i}}$, then the induced action on  $ H^*(X;\mathbb{Z}) $ is  trivial.
\end{lemma}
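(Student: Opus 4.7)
The plan is to show directly, using the explicit ring structure given by the K\"unneth formula, that $\sigma^*$ fixes every ring generator of $H^*(X;\mathbb{Z})$, and is therefore the identity. Since both $H^*(S^{2m_i};\mathbb{Z})$ and $H^*(\mathbb{C}P^{n_i};\mathbb{Z})$ are torsion-free, K\"unneth gives
\begin{equation*}
H^*(X;\mathbb{Z}) \cong \mathbb{Z}[a_1,\ldots,a_k,b_1,\ldots,b_k]\big/\big\langle a_i^2,\ b_i^{n_i+1}\big\rangle,
\end{equation*}
with $\deg(a_i)=2m_i$ and $\deg(b_i)=2$. In particular $H^*(X;\mathbb{Z})$ is torsion-free, so the mod 2 reduction is surjective and $H^*(X;\mathbb{Z}_2)=H^*(X;\mathbb{Z})\otimes\mathbb{Z}_2$. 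A generator $\sigma$ of $\mathbb{Z}_p$ induces a graded ring automorphism $\sigma^*$ satisfying $(\sigma^*)^p=\mathrm{id}$ and, by hypothesis, $\sigma^*\equiv\mathrm{id}\pmod{2}$. I would first establish $\sigma^*(b_i)=b_i$ for each $i$, then $\sigma^*(a_i)=a_i$; since these elements generate the ring, $\sigma^*$ will then be the identity on $H^*(X;\mathbb{Z})$.

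\textbf{Action on the generators.} Write $\sigma^*(b_i)=c_{ii}b_i+w_i$, where $w_i$ lies in the span of $\{b_j : j\neq i\}\cup\{a_l : m_l=1\}$. Mod 2 triviality forces $c_{ii}$ to be odd and every coefficient of $w_i$ to be even. Applying $\sigma^*$ to $b_i^{n_i+1}=0$ yields $(c_{ii}b_i+w_i)^{n_i+1}=0$. Since $w_i$ contains no $b_i$-factor, every term in $b_i^{n_i+1-r}w_i^r$ with $r\geq 2$ has $b_i$-exponent at most $n_i-1$, so the coefficient of the basis monomial $b_i^{n_i}\mu$ with $\mu\in\mathrm{supp}(w_i)$ comes solely from the $r=1$ term and equals $(n_i+1)c_{ii}^{n_i}\cdot(\text{coefficient of }\mu\text{ in }w_i)$. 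This must vanish; since $c_{ii}\neq 0$, every coefficient of $w_i$ is zero and $w_i=0$. Then $(\sigma^*)^p(b_i)=c_{ii}^pb_i=b_i$ forces $c_{ii}^p=1$, and since $p$ is odd, $c_{ii}=1$. The analysis for $\sigma^*(a_i)$ is analogous and rests on the observation that the only basis monomial of $H^{2m_i}(X;\mathbb{Z})$ containing $a_i$ as a factor is $a_i$ itself (the degree equation and $\epsilon_i=1$ force the remaining indices to vanish). Writing $\sigma^*(a_i)=\alpha_i a_i+v_i$ with $v_i$ supported on basis monomials with no $a_i$-factor and $\alpha_i$ odd, the relation $a_i^2=0$ gives $2\alpha_i a_iv_i+v_i^2=0$. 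Since $v_i^2$ contains no $a_i$-factor, the coefficient of $a_i\mu$ for each $\mu\in\mathrm{supp}(v_i)$ equals $2\alpha_i\cdot(\text{coefficient of }\mu\text{ in }v_i)$; this forces $v_i=0$, and then $\alpha_i^p=1$ with $p$ odd gives $\alpha_i=1$.

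\textbf{Expected obstacle.} The entire argument reduces to elementary coefficient comparisons in the K\"unneth basis. The only technical point demanding care is the bookkeeping of the higher-order terms in the binomial expansion of $(c_{ii}b_i+w_i)^{n_i+1}$, specifically the verification that they cannot interfere with the coefficient of $b_i^{n_i}\mu$; this is resolved cleanly by tracking $b_i$-degrees. I note that the free action hypothesis plays no essential role in the argument sketched here, and that the odd-prime hypothesis enters only through the implications $c_{ii}^p=1\Rightarrow c_{ii}=1$ and $\alpha_i^p=1\Rightarrow \alpha_i=1$, which fail for $p=2$ due to the solution $-1$. I do not anticipate any substantial obstruction beyond this routine verification.
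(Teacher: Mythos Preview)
Your argument is correct and takes a genuinely different route from the paper. The paper never touches the ring structure: since $H^*(X;\mathbb{Z})$ is torsion-free, the matrix $A$ of $g^*_{\mathbb{Z}}$ on each $H^k$ satisfies $A\equiv I\pmod 2$, so $A=I+2M$; then $0=A^p-I=(A-I)(A^{p-1}+\cdots+I)$, and since $A^{p-1}+\cdots+I\equiv pI\equiv I\pmod 2$ has odd determinant it is invertible over $\mathbb{Q}$, forcing $A=I$. This is shorter, uses nothing about $X$ beyond torsion-freeness of its integral cohomology, and handles the disconnected case $m_i=0$ uniformly (where your K\"unneth presentation $\mathbb{Z}[a_i]/\langle a_i^2\rangle$ is not literally correct, as $H^*(S^0;\mathbb{Z})\cong\mathbb{Z}\times\mathbb{Z}$). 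Your approach instead exploits the specific relations $b_i^{n_i+1}=0$ and $a_i^2=0$ to kill the off-diagonal pieces $w_i,v_i$ by coefficient extraction, and only then invokes $(\sigma^*)^p=\mathrm{id}$ on the diagonal scalars; this is more laborious but yields the sharper statement that every odd-order graded ring automorphism of $H^*(X;\mathbb{Z})$ congruent to the identity mod $2$ is trivial, independent of any group action. Your remark that freeness is unused is correct and applies equally to the paper's proof; freeness enters only in the subsequent lemma via the Lefschetz fixed point theorem.
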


\begin{proof}
As the integral cohomology groups of $ X $ are torsion-free, we get $$  H^k(X;\z2)\cong  H^k(X;\Z) \otimes \z2 .$$  Let $ g$ be a generator of $ \zp $ and  $ \phi: \Z \to \z2 $ be the non-trivial homomorphism. Then we have the following commutative diagram:

\[\begin{tikzcd}[column sep=4em, row sep=2em]
H^k(X;\Z) \ar{r} {g_\Z^*} \ar[swap]{d}{\phi^*} & H^k(X;\Z) \ar{d}{\phi^*} \\
H^k(X;\z2) \ar[swap]{r}{g_{\z2}^*} &  H^k(X;\z2), 
\end{tikzcd}
\]
\noindent where  $ g_\Z^* $ and $ g_{\z2}^* $ are the induced maps on cohomology with $ \Z $ and $ \z2 $-coefficients, respectively.  Let $ A\in \mathrm{GL}(n,\Z) $ be the matrix corresponding to ${g_{\small {\Z}}^*}$. Then the matrix $ \overline{A} $ corresponding to ${g_{\small{\z2}}^*}$ is obtained by reducing all the entries of $ A $ modulo 2. By assumption  $\overline{A}=I$, the identity matrix. Consequently, $ A=2M+I$ for some matrix $ M $ and $ A^p=I $. Thus,
 \begin{equation}\label{equ-matrix}
 A^p-I = (A-I) \hspace{1mm}(A^{p-1} + A^{p-2} + \cdots +  I  )=0,
 \end{equation}  
and
$$ (A^{p-1} + A^{p-2} + \cdots + I  )= (2M + I)^{p-1} +  (2M+I)^{p-2} + \cdots  + I= 2N + I,$$ for some matrix $ N $.  Since det$ (2N + I) $ is odd,  
$ (A^{p-1} + A^{p-2}+ \cdots + I  )$ is invertible. It follows from equation \eqref{equ-matrix} that $ A=I $.
\end{proof}
By using Lefschetz fixed point theorem we get the following result.   
\begin{lemma}\label{lemma-zp-not-action-univ-Z-cohomlogy}
	If $ p $ is an odd prime, then  $\mathbb{Z}_p$ cannot act freely and $ \mathbb{Z} $-cohomologically trivially on a finite-dimensional CW-complex $X$ homotopy equivalent to  ${\displaystyle \prod_{i=1}^{k} S^{2m_i}\times\mathbb{C}P^{n_i}}$.
	
\end{lemma}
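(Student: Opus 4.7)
The plan is to argue by contradiction using the Lefschetz fixed point theorem. Suppose such a free action exists, with $p$ an odd prime and with trivial action on $H^*(X;\mathbb{Z})$. I would let $g$ be a generator of $\mathbb{Z}_p$, viewed as a self-homeomorphism of $X$. Since $g$ acts trivially on integral cohomology, it acts trivially on rational cohomology as well, so the Lefschetz number $L(g) = \sum_i (-1)^i \operatorname{tr}(g^*\mid H^i(X;\mathbb{Q}))$ reduces to the Euler characteristic $\chi(X)$.

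Next I would compute $\chi(X)$ using homotopy invariance and multiplicativity of the Euler characteristic over products. Since $\chi(S^{2m_i}) = 2$ and $\chi(\mathbb{C}P^{n_i}) = n_i+1$, one finds that $\chi(X) = 2^k\prod_{i=1}^{k}(n_i+1)$, which is a strictly positive integer.

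On the other hand, since the action is free, $g$ has no fixed point, so the Lefschetz fixed point theorem forces $L(g)=0$. This contradicts $L(g) = \chi(X) > 0$, completing the proof.

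There is essentially no obstacle in this argument; it is a direct application of Lefschetz. The only subtle point worth noting is that Lefschetz requires a genuine self-map of $X$ (not merely a map up to homotopy), which the $\mathbb{Z}_p$-action supplies; the homotopy equivalence to the product is used only to identify the Euler characteristic of $X$ with that of $\prod_{i=1}^k S^{2m_i}\times\mathbb{C}P^{n_i}$.
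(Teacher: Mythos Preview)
Your proposal is correct and matches the paper's approach exactly: the paper's entire proof of this lemma is the sentence ``By using Lefschetz fixed point theorem we get the following result,'' and you have filled in precisely the intended details (trivial action forces $L(g)=\chi(X)=2^k\prod_i(n_i+1)>0$, contradicting fixed-point freeness).
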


Lemma \ref{lemma-zp-trivial-imply-z-trivial} and Lemma  \ref{lemma-zp-not-action-univ-Z-cohomlogy} together imply the following result.
\begin{lemma}\label{lemma-zp}
	If $ p $ is an odd prime, then $\mathbb{Z}_p$ cannot act freely and mod 2 cohomologically trivially on a finite-dimensional CW-complex $X$ homotopy equivalent to  ${\displaystyle \prod_{i=1}^{k} S^{2m_i}\times\mathbb{C}P^{n_i}}$.
	\end{lemma}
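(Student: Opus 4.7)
The plan is to deduce Lemma \ref{lemma-zp} by composing the two preceding lemmas in a direct contradiction argument. Suppose toward contradiction that for some odd prime $p$, the group $\mathbb{Z}_p$ acts freely and mod 2 cohomologically trivially on a finite-dimensional CW-complex $X \simeq \prod_{i=1}^k S^{2m_i}\times\mathbb{C}P^{n_i}$.

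First I would invoke Lemma \ref{lemma-zp-trivial-imply-z-trivial}. Because the integral cohomology of the product of even-dimensional spheres and complex projective spaces is torsion-free, the assumed mod 2 triviality of the induced action lifts to triviality on $H^*(X;\mathbb{Z})$. Then I would apply Lemma \ref{lemma-zp-not-action-univ-Z-cohomlogy}, which asserts that no free $\mathbb{Z}$-cohomologically trivial $\mathbb{Z}_p$-action on such an $X$ exists. The combination immediately yields a contradiction.

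There is essentially no genuine obstacle inside the present lemma, since all the substantive work has already been carried out in its two antecedents. For context, the actual content sits in those earlier arguments: Lemma \ref{lemma-zp-trivial-imply-z-trivial} exploits the torsion-free cohomology together with the matrix observation that $A \equiv I \pmod 2$ and $A^p = I$ (for $p$ odd) force $A = I$, via the invertibility of $A^{p-1}+A^{p-2}+\cdots+I$ modulo the relation $A^p - I = 0$. Lemma \ref{lemma-zp-not-action-univ-Z-cohomlogy} then obstructs any free $\mathbb{Z}$-trivial action through the Lefschetz fixed-point theorem: such an action would give Lefschetz number equal to $\chi(X)$ for every group element, yet $\chi(X)=\prod_i \chi(S^{2m_i})\cdot\chi(\mathbb{C}P^{n_i})$ is a nonzero positive integer, contradicting freeness (which forces the Lefschetz number of every nontrivial element to vanish). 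Thus the only step remaining in the present proof is the one-line chaining of these two conclusions, which is what I would write.
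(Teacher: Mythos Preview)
Your proposal is correct and matches the paper's approach exactly: the paper simply states that Lemma~\ref{lemma-zp-trivial-imply-z-trivial} and Lemma~\ref{lemma-zp-not-action-univ-Z-cohomlogy} together imply the result, which is precisely the chaining argument you describe.
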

\begin{lemma}\label{lemma-disjoint-union}
A finite group $ G $ acts freely and mod 2 cohomologically trivially on the disjoint union $\displaystyle {\coprod _{i=1}^{k} X}$ of a space $ X $ if and only if it acts freely and mod 2 cohomologically trivially on $ X $.
\end{lemma}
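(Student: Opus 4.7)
The proof should be a fairly direct unpacking of what the hypotheses mean for the components. I plan to handle the two implications separately, with the easy direction going first.

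For the ``only if'' direction, suppose $G$ acts freely and mod 2 cohomologically trivially on $X$. The plan is to extend this to the obvious pointwise action on $\coprod_{i=1}^k X$ defined by $g\cdot(i,x)=(i,g\cdot x)$. Freeness on each copy immediately gives freeness of the extended action. The cohomology decomposes $G$-equivariantly as $H^*(\coprod_{i=1}^k X;\mathbb{Z}_2)\cong \bigoplus_{i=1}^k H^*(X;\mathbb{Z}_2)$, and since each summand carries the trivial $G$-action by hypothesis, the induced action on the disjoint union is trivial.

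For the converse, the key observation is that the $G$-action on $H^0(\coprod_{i=1}^k X;\mathbb{Z}_2)\cong (\mathbb{Z}_2)^k$ is precisely the permutation representation coming from the action of $G$ on the set of path components (assuming $X$ connected, so the $k$ components of the disjoint union are the $k$ copies of $X$; in general one works at the level of components of $X$). Mod 2 cohomological triviality in degree $0$ therefore forces $G$ to preserve each copy of $X$ setwise. Fixing one such copy $X_0$, the $G$-action on $\coprod_{i=1}^k X$ restricts to a $G$-action on $X_0\cong X$. This restriction is free because the ambient action is free, and it is mod 2 cohomologically trivial because the decomposition above identifies $H^*(X_0;\mathbb{Z}_2)$ as a $G$-stable direct summand of $H^*(\coprod_{i=1}^k X;\mathbb{Z}_2)$ on which $G$ acts trivially by hypothesis.

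There is no real obstacle in this argument; the content is essentially a bookkeeping observation that on disjoint unions cohomological triviality in degree zero pins down the component-permutation behaviour. The only point worth flagging is that one does need the ``mod 2 cohomologically trivially'' hypothesis to include degree $0$ (as is standard) in order to conclude that the components are preserved; with only positive-degree triviality one would merely get an action compatible with some permutation of the $k$ copies and would need an additional averaging argument to single out one copy.
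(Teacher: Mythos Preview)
Your proposal is correct and follows essentially the same approach as the paper's proof: define the diagonal action for one direction, and use the $H^0$ argument to show components are preserved for the other (the paper merely asserts this last point without the explicit justification you provide). Note, however, that you have swapped the labels: what you call the ``only if'' direction---assuming the $G$-action on $X$ and constructing one on the disjoint union---is actually the ``if'' direction, and vice versa.
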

\begin{proof}
	Suppose $ G $ acts freely and mod 2 cohomologically trivially on $ X $. Then we can define a free $ G $-action on $\displaystyle {\coprod _{i=1}^{k} X}=\displaystyle {\bigcup _{i=1}^{k}\,X \times \{i\}}$ by setting $$ g\cdot(x, i)=(gx, i) .$$
	Since the cohomology algebra of a disjoint union is the direct product of the cohomology algebra of the components, it follows that the induced action of $ G $ on mod 2 cohomology algebra is also trivial. Conversely, if $ G $ acts freely and mod 2 cohomologically trivially on $\displaystyle {\coprod _{i=1}^{k} X}$, then the action preserves each component and acts as desired.

\end{proof}

\medskip
Now we prove our main results. For the definition of the function $ \mu $ see equation \eqref{mu}.
\begin{theorem}\label{univ-cover}
Let $G$ be a finite group acting freely and mod 2 cohomologically trivially on a finite-dimensional CW-complex $X$ homotopy equivalent to ${\displaystyle \prod_{i=1}^{k} S^{2m_i}\times\mathbb{C}P^{n_i}}$.  Then $G\cong (\mathbb{Z}_2)^l$ for some $ l $ satisfying 
$$ l\leq	k_0 + \mu(n_1)+\mu(n_2)+\cdots+\mu(n_k),$$
where $ k_0 $ is the number of positive-dimensional spheres in the product.

\end{theorem}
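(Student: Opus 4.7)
The plan is to argue in three main stages: reduce to the case where every $m_i\ge 1$, show that $G$ must be elementary abelian of $2$-power order, and bound the rank by analyzing the Leray--Serre spectral sequence of the Borel fibration. For the first stage, if $m_i=0$ for some $i$ then $S^{2m_i}=S^{0}$ is a two-point space, so the factor $S^{0}\times\mathbb{C}P^{n_i}$ is a disjoint union of two copies of $\mathbb{C}P^{n_i}$; iterated application of Lemma \ref{lemma-disjoint-union} lets me assume $m_i\ge 1$ for every $i$, so that $X$ is simply connected, $k_0=k$, and the inequality to prove becomes $l\le k+\sum_{i}\mu(n_i)$. A key consequence is that $H^*(X;\mathbb{Z}_2)$ is concentrated in even degrees, so in the Borel spectral sequence of any mod-$2$ cohomologically trivial action all even-page differentials vanish automatically and every transgression image in $H^*(BG;\mathbb{Z}_2)$ lies in odd degrees.

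To establish $G\cong(\mathbb{Z}_2)^l$, Lemma \ref{lemma-zp} combined with Cauchy's theorem forces $G$ to be a $2$-group. To rule out $\mathbb{Z}_4\le G$, I examine the Borel spectral sequence of a hypothetical free, mod-$2$ cohomologically trivial $\mathbb{Z}_4$-action on $X$. With $H^*(B\mathbb{Z}_4;\mathbb{Z}_2)=\Lambda(u)\otimes\mathbb{Z}_2[v]$, where $\deg u=1$ and $\deg v=2$, every transgression image lands in the odd-degree part $u\cdot\mathbb{Z}_2[v]$. A direct check then shows that each class $v^{j}\in E^{2j,0}$ is a permanent cycle: outgoing differentials vanish for degree reasons, and any incoming differential picks up an extra factor of $u$ from a transgression image and is killed by $u^{2}=0$. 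This produces infinitely many permanent cycles in $E_{\infty}^{\,*,0}$, contradicting finite-dimensionality of $H^*(X/\mathbb{Z}_4;\mathbb{Z}_2)$; hence $G$ has exponent $2$ and is elementary abelian.

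For the rank bound, take $G=(\mathbb{Z}_2)^l$ so that $E_2^{p,q}=\mathbb{Z}_2[t_1,\dots,t_l]\otimes H^{q}(X;\mathbb{Z}_2)$. Applying the Leibniz rule to the truncation $\beta_i^{n_i+1}=0$ yields $(n_i+1)\beta_i^{n_i}\cdot d_3(\beta_i)=0$ on $E_3=E_2$; since $\beta_i^{n_i}\neq 0$, for $n_i$ even we deduce $d_3(\beta_i)=0$, and then by degree reasons $\beta_i$ is a permanent cycle making no contribution to the transgression ideal. Therefore the primitive generators of $H^*(X;\mathbb{Z}_2)$ that can transgress non-trivially are the $k$ classes $\alpha_i$ together with the $\sum_{i}\mu(n_i)$ classes $\beta_i$ with $n_i$ odd, giving at most $k+\sum_{i}\mu(n_i)$ generators of the transgression ideal $I\subseteq\mathbb{Z}_2[t_1,\dots,t_l]$. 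For $H^*(X/G;\mathbb{Z}_2)$ to be finite-dimensional, $I$ must cut out a $0$-dimensional variety, i.e.\ have height $l$ in the polynomial ring, forcing $l\le k+\sum_{i}\mu(n_i)$.

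The main obstacle is this last step, particularly the handling of the $\alpha_i$'s with $m_i>1$: such an $\alpha_i$ need not transgress cleanly at page $2m_i+1$ because earlier odd-page differentials are in principle possible, and one must argue that $\alpha_i$ still contributes one effective generator to the ideal $I$. The final ``height $l$'' inequality then requires a careful Krull-dimension or Quillen-stratification argument to extract from the ideal structure of $\mathbb{Z}_2[t_1,\dots,t_l]/I$.
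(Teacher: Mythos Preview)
Your reduction to the case $m_i\ge 1$ via Lemma \ref{lemma-disjoint-union} and your exclusion of odd primes via Lemma \ref{lemma-zp} match the paper. Your argument ruling out $\mathbb{Z}_4$ is a close variant of the paper's: the paper uses naturality with respect to the subgroup inclusion $\mathbb{Z}_2\hookrightarrow\mathbb{Z}_4$ to show that the restricted $\mathbb{Z}_2$-spectral sequence collapses (since $i^*(u)=0$ annihilates the odd-column images of all differentials), contradicting freeness of the $\mathbb{Z}_2$-action; you argue internally to the $\mathbb{Z}_4$-sequence that each $v^j$ survives because any incoming differential factors through $u^2=0$. Both exploit the same parity feature of $H^*(B\mathbb{Z}_4;\mathbb{Z}_2)$.

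The substantive divergence is in the rank bound, and here the paper takes a completely different and far more elementary route: it passes to \emph{integral} cohomology and uses the Lefschetz fixed-point theorem. Since $H^*(X;\mathbb{Z})$ is torsion-free, mod $2$ triviality forces each $g\in G$ to act on the generators $a_i\in H^2(X;\mathbb{Z})$ and $b_i\in H^{2m_i}(X;\mathbb{Z})$ by $\pm 1$; the Lefschetz number of $g$ then factors as $\prod_i(1+\xi_i+\cdots+\xi_i^{n_i})(1+\lambda_i)$ with $\xi_i,\lambda_i\in\{\pm 1\}$, and for $g\ne 1$ this must vanish, which happens only through a sign flip on some $b_i$ or on some $a_j$ with $n_j$ odd. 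This embeds $G$ into $(\mathbb{Z}_2)^{k+\sum_i\mu(n_i)}$ directly, with no spectral-sequence bookkeeping at all.

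Your transgression-ideal approach, by contrast, runs head-on into precisely the difficulty that keeps the rank conjecture for products of spheres of \emph{different} dimensions open: once the $\alpha_i$ have distinct degrees, the earlier odd-page differentials $d_3,d_5,\ldots$ can send a high-degree $\alpha_i$ into mixed terms of the form $c\otimes(\text{monomial in lower generators})$ rather than into row $0$, so there is no well-defined transgression of $\alpha_i$ contributing a single generator to $I$, and the bound on the number of generators of $I$ (hence the Krull-height inequality $l\le k+\sum_i\mu(n_i)$) is not available without a substantial additional argument. You correctly flag this as the main obstacle; the point is that the even-dimensionality of \emph{every} factor is exactly what makes the Lefschetz-number computation decisive and allows one to sidestep the spectral-sequence analysis entirely.
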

\begin{proof}
	First, we consider the case where each $m_i>0  $. We prove the assertion by first showing that $\mathbb{Z}_4$ does not act freely and mod 2 cohomologically trivially on $X\simeq{\displaystyle \prod_{i=1}^{k} S^{2m_i}\times\mathbb{C}P^{n_i}}$. On the contrary, suppose  $\mathbb{Z}_4$ acts freely and mod 2 cohomologically trivially on $ X $. By K\"{u}nneth formula we get $$H^*(X; \mathbb{Z}_2)=\mathbb{Z}_2[a_1,\dots, a_k]/\big\langle a_1^{n_1+1},\dots ,a_k^{n_k+1}\big\rangle \otimes \Bigwedge \big(b_1,\dots, b_k\big),$$  where $\deg(a_i)=2$ and  $\deg(b_i)=2m_i$. Consider the Leray-Serre spectral sequence $ (E_r^{*,*},d_r) $ associated to the Borel fibration $$X\stackrel{}{\hookrightarrow} X_{\mathbb{Z}_4} \stackrel{\pi}{\longrightarrow} B_{\mathbb{Z}_4}.$$ 
	The $ E_2$-page of this spectral sequence looks like 
	\[   E_2^{*,*}  \cong  H^*(B_{\mathbb{Z}_4}; H^*(X;\mathbb{Z}_2)).\]
	Since the induced action on mod 2 cohomology is trivial, the system of local coefficients is simple.  Thus, we have
	$$
	E_2^{*,*}  \cong  H^*(B_{\mathbb{Z}_4};\mathbb{Z}_2) \otimes H^*(X; \mathbb{Z}_2).
	$$
	 Consider the subgroup inclusion $  i: \z2 \hookrightarrow \mathbb{Z}_4 $. By restricting the $ \mathbb{Z}_4 $ action to $ \z2 $, we get a free $ \z2 $ action on $ X $. Then consider the Leray-Serre spectral sequence $(\widetilde{E } _r^{*,*}, \widetilde{d}_r)$ associated to the Borel fibration $$X\stackrel{}{\hookrightarrow} X_{\mathbb{Z}_2} \stackrel{\pi}{\longrightarrow} B_{\mathbb{Z}_2}.$$
	The naturality of the Leray-Serre spectral sequence gives the following commutative diagram:
	\[\begin{tikzcd}[column sep=4em, row sep=2em]
	E_r^{k,l} \ar{r} {f^*} \ar[swap]{d}{d_r} & \widetilde{E }_r^{k,l} \ar{d}{\widetilde{d}_r} \\
	E_r^{k+r,l-r+1} \ar[swap]{r}{f^*} &  \widetilde{E }_r^{k+r,l-r+1},
	\end{tikzcd}
	\]
	\noindent where at the $ E_2 $-page the map $ f^*: (E_r^{*,*},d_r) \to  (\widetilde{E } _r^{*,*}, \widetilde{d}_r)$ is defined by $$f^*= i^* \otimes id: H^*(B_{\mathbb{Z}_4}; \z2) \otimes \coh \to H^*(B_{\z2};\z2)\otimes \coh.$$ 
We know that, 
$$H^*(B_{\mathbb{Z}_4}; \mathbb{Z}_2)\cong \mathbb{Z}_2[x]\otimes\Bigwedge(y),$$ where $\deg(x)=2$ and $\deg(y)=1 $. We also recall that $H^*(B_{\mathbb{Z}_2}; \mathbb{Z}_2)\cong \mathbb{Z}_2[t],$ where $\deg(t)=1$. It can be seen that the map $ i^*: H^*(B_{\mathbb{Z}_4}; \mathbb{Z}_2) \to H^*(B_{\mathbb{Z}_2}; \mathbb{Z}_2)$ is given by $ i^*(x)=t^2 $ and $ i^*(y)=0 $. Since $i^*(y)=0$, it follows that  $f^*$ is zero on ${E_r^{odd, *}}$. 
Since $H^*(X; \mathbb{Z}_2)$ has non-zero cohomology only in even dimension, it follows that the non-zero differentials lie in $E_r$-page, where $r$ is odd.  Consider an element $z\in H^*(X; \mathbb{Z}_2)$. 
Then $$\widetilde{d}_{2n+1}(1\otimes z)=\widetilde{d}_{2n+1}f^*(1\otimes z) =f^*d_{2n+1}(1\otimes z).$$ As $d_{2n+1}(1\otimes z) \in E_{2n+1}^{odd, *}$, it follows that $\widetilde{d}_{2n+1}(1\otimes z)=0$. Since this is true for any $z\in H^*(X; \mathbb{Z}_2)$ and for any differential $\widetilde{d}_r$, it follows that the corresponding spectral sequence collapses at the $E_2$-page. This contradicts the fact that $X/\mathbb{Z}_2$ is finite-dimensional. Thus, we have proved that $\mathbb{Z}_4$ cannot act freely on $X$.\par
If $ G $ is not a finite 2-group, then there exists a prime $ p> 2$ such that $ p$ divides $|G|$. Consequently, $ \zp $ acts freely and mod 2 cohomologically trivially on $ X $, which contradicts Lemma \ref{lemma-zp}. Therefore, $ G $ is a finite 2-group.\par
Now if $ n_i\equiv 1\pmod 4 $ for all $ i $, then the desired bound on the rank of $ G $ follows from  Euler characteristic argument. But, if  $n_i\equiv 3\pmod 4$ for some $ i $, then the Euler characteristic fails to give exact bound on the rank of $G$. To derive the exact bound, we first recall some properties of Lefschetz number.  For a map $ f:X \to X $, we can define the  Lefschetz number $ \tau(f) $ as $$ \tau(f)= \sum_{i} (-1)^i\; tr(f^*:H^n(X; \mathbb{Z})\to H^n(X; \mathbb{Z}) ).$$
Suppose $ f^* $ is an automorphism of $ H^*(X; \mathbb{Z}) $ such that $ f^*(a_i)=\xi_i a_i $ and $ f^*(b_i)=\lambda_i b_i $, where $ a_i, b_i $ are the generators of $ H^*(X;\mathbb{Z}) $ and $ \xi_i, \lambda_i \in \mathbb{Z}$. Then its Lefschetz number is precisely the following
  \begin{equation}\label{equ-lefschetz}
 \tau(f)=\prod_{i=1}^{k}(1 + \xi_i+\xi_i^2+\cdots+\xi_i^{n_i})(1+\lambda_i).
 \end{equation}\par
Let $ g$ be an element of  $G $ and $ g^*_{\mathbb{Z}} $ be the induced map on $ H^*(X; \mathbb{Z}) $. Since $ G $ acts freely, the Lefschetz number of $ g^*_{\mathbb{Z}} $ must vanish for all $ g\ne 1 $. This shows that  $ G $ is isomorphic to some subgroup of $ Aut(H^*(X; \mathbb{Z})) $. Notice that, an element of $  Aut(H^*(X;\mathbb{Z})) $ is completely determined by its value on the generators $ a_1, a_2, \dots, a_k $ and $ b_1, b_2, \dots ,b_k $. Since $  g^*_\mathbb{Z} $ is an automorphism, it preserves degrees as well as cup-length, where for an element $ x\in H^*(X; \mathbb{Z}) $, the cup-length of $ x $ is the greatest integer $ n $ such that $ x^n \ne 0$. Thus, $ g^*_\mathbb{Z} $ can only permute between those generators which have same degrees and same cup-length (this follows because the cup-length of a sum of the generators is the sum of the cup-lengths of the individual generators). In other words, $ g^*_\mathbb{Z}(a_i)= \pm a_{\sigma(i)} $ and $ g^*_\mathbb{Z}(b_i)= \pm b_{\gamma(i)} $, where  $ \sigma,  \gamma \in S_k$ (the symmetric group on $ k $ elements) and $ a_\sigma(i)  $ has same cuplength as $ a_i $. Let  $  g^*_{\mathbb{Z}_2} $ be the induced map on $ H^*(X; \mathbb{Z}_2) $. By the naturality of the cohomology  functor, we get  the following commutative diagram:

\[\begin{tikzcd}[column sep=4em, row sep=2em]
H^k(X;\Z) \ar{r} {g_\Z^*} \ar[swap]{d}{} & H^k(X;\Z) \ar{d}{} \\
H^k(X;\z2) \ar[swap]{r}{g_{\z2}^*} &  H^k(X;\z2). 
\end{tikzcd}
\]
\noindent
Since we have assumed that  $ G $ acts mod 2 cohomologically trivially, we get   $ g^*_\mathbb{Z}(a_i)= \pm a_{i} $ and $ g^*_\mathbb{Z}(b_i)= \pm b_{i} $. This shows that 
$ G$ is a subgroup of  $(\mathbb{Z}_2)^{2k} $. The Lefschetz number should be zero corresponding to each non-identity element of $ G $. From (\ref{equ-lefschetz}) we get that, $ \tau(f)=0 $ only if either  $ \lambda_i=-1 $ for some $ i $ or $1 + \xi_j+\xi_j^2+\cdots+\xi_j^{n_j}=0  $ for some $ j $. The last case is possible only if $ n_j $ is odd and in that case we get $ \xi_j=-1 $. This implies that $$ G\subseteq (\mathbb{Z}_2)^{k + \mu(n_1)+\mu(n_2)+\cdots+\mu(n_k)} ,$$ where $ \mu(n)=0 $ if $ n $ is even and $ \mu(n)=1 $ if $ n $ is odd (note that if some $n_j  $ is even, then an automorphism $ g^* $ that maps  $ a_j \mapsto -a_j $ and fixes all other generators of $ H^*(X;\mathbb{Z}) $ is not possible). Consequently, $G\cong (\mathbb{Z}_2)^l$ for some $ l $ satisfying 
$$ l\leq	k + \mu(n_1)+\mu(n_2)+\cdots+\mu(n_k).$$\par
Next, we consider the case where $ m_i =0$ for some $ i $. Let $ k_0 $ be the number of positive-dimensional spheres in the product. Note that, ${\displaystyle \prod_{i=1}^{k} S^{2m_i}\times\mathbb{C}P^{n_i}}$ can be written as disjoint union of $ 2^{k-k_0} $ copies of $\Big({\displaystyle{ \prod_{m_i>0} S^{2m_i} \times \,\prod_{i=1}^{k} \mathbb{C}P^{n_i}  }}\Big)$. By  Lemma \ref{lemma-disjoint-union}, it is enough to prove the result for $\Big({\displaystyle{ \prod_{m_i>0} S^{2m_i} \times \,\prod_{i=1}^{k} \mathbb{C}P^{n_i}  }}\Big)$. Applying similar argument as in the previous case, we get the desired result.
\end{proof}

\begin{corollary}
	
	   For mod 2 cohomologically trivial actions
	  
	  \begin{equation*}
	  \textrm{frk}_p(\, {\displaystyle \prod_{i=1}^{k} S^{2m_i}\times\mathbb{C}P^{n_i}} \,)= \left\{   \begin{array}{ll}
	  k_0 + \mu(n_1)+\mu(n_2)+\cdots+\mu(n_k) & \textrm{if $p=2$},\\
	  0 & \textrm{if $p>2$,}\\
	  \end{array} \right.
	  \end{equation*}
	  where $ k_0 $ is the number of positive-dimensional spheres in the product.

\end{corollary}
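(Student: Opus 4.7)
The corollary is a packaging of Theorem \ref{univ-cover} and Lemma \ref{lemma-zp} together with an explicit construction realising the upper bound. My plan is to dispose of the upper bounds by citing the already-established results, then to build a maximal free action by hand. For $p > 2$, Lemma \ref{lemma-zp} forbids any free, mod 2 cohomologically trivial action of $\mathbb{Z}_p$ on $X \simeq \prod_{i=1}^{k} S^{2m_i} \times \mathbb{C}P^{n_i}$, so $\textrm{frk}_p(X) = 0$ under the stated hypothesis. For $p = 2$, the upper bound $r \le k_0 + \mu(n_1) + \cdots + \mu(n_k)$ follows by applying Theorem \ref{univ-cover} to $G = (\mathbb{Z}_2)^r$.

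The matching lower bound requires an explicit construction. Set $N = k_0 + \sum_{i} \mu(n_i)$. On each positive-dimensional factor $S^{2m_i}$ I would use the antipodal involution $x \mapsto -x$: it is free, and its induced map on $H^{2m_i}(S^{2m_i}; \mathbb{Z})$ is multiplication by $-1$, which becomes the identity modulo $2$. On each $\mathbb{C}P^{n_i}$ with $n_i$ odd I would use the involution $T_1$ from Section \ref{sec3}: it is free by the verification there, and since every non-zero mod 2 cohomology group of $\mathbb{C}P^{n_i}$ is one-dimensional, any self-homeomorphism must act as the identity on $H^*(\mathbb{C}P^{n_i}; \mathbb{Z}_2)$. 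Taking the product of these $N$ involutions, each acting on a distinct tensor factor, yields an action of $(\mathbb{Z}_2)^N$ on $X$ which is free (any non-identity element acts freely on at least one factor) and, by the K\"{u}nneth formula, mod 2 cohomologically trivial (a tensor product of identity maps is the identity).

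The genuine work has already been carried out in Theorem \ref{univ-cover}, whose Leray-Serre spectral sequence argument rules out $\mathbb{Z}_4$ and whose Lefschetz-number count sharpens the bound; no serious obstacle remains in the corollary itself, which is an assembly step together with the explicit construction above. The only minor point to verify with care is that the product action of $(\mathbb{Z}_2)^N$ is indeed free, but this is immediate once each component involution is free.
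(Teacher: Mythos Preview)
Your argument is correct and matches the paper's intent: the corollary is stated without proof, the upper bounds being immediate from Theorem~\ref{univ-cover} and Lemma~\ref{lemma-zp}, and the lower bound for $p=2$ coming from the obvious product of the antipodal involutions on the positive-dimensional spheres with the involutions $T_1$ of Section~\ref{sec3} on the odd $\mathbb{C}P^{n_i}$. You have simply written out explicitly the construction that the paper leaves implicit.
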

\vspace{.5mm}

\begin{remark}
	
If we take arbitrary dimensional spheres in the statement of Theorem \ref{univ-cover}, then our method does not work. Here we would like to emphasize that the problem of determining the free rank for products of different dimensional spheres is a long standing open problem. 
\end{remark}
\vspace{1mm}
In view of the preceding remarks, we would like to propose the following conjecture.
\begin{conjecture}\label{conjectureforuniv}
	If $ (\z2)^l$ acts freely and mod 2 cohomologically trivially on a finite-dimensional CW-complex   $X\simeq{\displaystyle \prod_{i=1}^{k} S^{m_i}\times\mathbb{C}P^{n_i}}$ where each $ m_i>0 $, then $$l \leq k + \mu(n_1)+\cdots+ \mu(n_k). $$
\end{conjecture}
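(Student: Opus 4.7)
The conjecture already posits $G=(\z2)^l$, so only the rank bound is required. The natural strategy is to extend the Lefschetz bookkeeping from Theorem \ref{univ-cover} by decomposing $G$ via the representation $\rho: G\to \mathrm{Aut}(H^*(X;\Z))$, bounding $|\mathrm{Im}(\rho)|$ and $|\ker(\rho)|$ separately, and combining.

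The cup-length argument from the proof of Theorem \ref{univ-cover} carries over: mod $2$ triviality forces $\rho(g)$ to act by $a_i\mapsto \xi_i a_i$ and $b_j\mapsto \lambda_j b_j$ with $\xi_i,\lambda_j\in\{\pm 1\}$. For arbitrary sphere dimensions the Lefschetz formula becomes
\[
\tau(\rho(g))=\prod_{i=1}^{k}\bigl(1+\xi_i+\cdots+\xi_i^{n_i}\bigr)\prod_{j=1}^{k}\bigl(1+(-1)^{m_j}\lambda_j\bigr),
\]
which must vanish for every $g\notin\ker(\rho)$; analyzing the largest subgroup of $(\z2)^{2k}$ contained in the vanishing locus bounds $|\mathrm{Im}(\rho)|$. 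For the kernel, an integrally trivial self-map of $\cp n$ has Lefschetz number $n+1\ne 0$ (no free action), and any free $\z2$-action on $S^{2n}$ has degree $-1$ and is hence not integrally trivial; so the only factors of $X$ that can support integrally trivial free $\z2$-actions are the odd-dimensional sphere factors, which suggests the bound $|\ker(\rho)|\le 2^{k_{\mathrm{odd}}}$.

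The main obstacle is that the naive product $|G|=|\mathrm{Im}(\rho)|\cdot|\ker(\rho)|$ is too weak, because contributions from an odd-dimensional sphere factor appear on both sides. As a warning example, on $S^1\times\cp 2$ one can achieve $|\mathrm{Im}(\rho)|=2$ (via rotation of $S^1$ composed with complex conjugation on $\cp 2$) and $|\ker(\rho)|=2$ (via rotation of $S^1$ alone) individually, yet no free $(\z2)^2$-action on $S^1\times\cp 2$ actually exists, because the product of the two generators has fixed set $S^1\times\rp 2$. A correct argument must therefore track the interaction between $\ker(\rho)$ and $\mathrm{Im}(\rho)$.

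To close this gap I would adapt the Borel spectral sequence argument used to rule out $\z4$ in Theorem \ref{univ-cover}. There the essential input was that $H^*(X;\z2)$ is concentrated in even degrees; here, odd-dimensional sphere factors introduce odd-degree classes and the parity-of-differential trick does not apply directly. A plausible refinement combines the parity argument with Steenrod operations, using their compatibility with transgressions and with the restriction from $B_{\z4}$ to $B_{\z2}$, so that enough differentials can still be forced to vanish. Alternatively, one might reduce to a pure product-of-spheres question on the exterior sub-algebra generated by the $b_j$'s and invoke the theorems of Hanke or Okutan--Yal\c{c}in on free ranks of products of spheres. I expect this interaction step, rather than the Lefschetz bookkeeping itself, to be the genuine obstruction, and likely the reason the statement is posed here as a conjecture.
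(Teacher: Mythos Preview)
The statement is a \emph{conjecture} in the paper, not a theorem, and the paper offers no proof. It is introduced immediately after the remark that ``if we take arbitrary dimensional spheres in the statement of Theorem \ref{univ-cover}, then our method does not work.'' So there is nothing in the paper to compare your proposal against.

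Your proposal is, by your own admission, not a proof but a plan with an identified gap, and your diagnosis of that gap is accurate. The Lefschetz analysis of $\mathrm{Im}(\rho)$ does extend to arbitrary sphere dimensions as you describe, but the product bound $|G|=|\mathrm{Im}(\rho)|\cdot|\ker(\rho)|$ overcounts, exactly as your $S^1\times\mathbb{C}P^2$ example shows, and the parity-of-differential argument used in Theorem \ref{univ-cover} to exclude $\mathbb{Z}_4$ genuinely relies on $H^*(X;\z2)$ being concentrated in even degrees, which fails once odd-dimensional sphere factors are present. Neither of your two suggested repairs---pushing Steenrod operations through the Borel spectral sequence, or reducing to the exterior subalgebra and invoking Hanke or Okutan--Yal\c{c}in---is known to close the gap; the relevant product-of-spheres rank conjecture is itself open in the generality you would need. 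Your concluding sentence is exactly right: this interaction step is precisely the obstruction, and it is why the paper poses the statement as a conjecture rather than a theorem.
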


\medskip
Next we prove a result related to the free rank of Dold manifolds. 
\begin{theorem}\label{doldfreerank}
Suppose $G$ is a finite group acting freely and mod 2 cohomologically trivially on a finite-dimensional CW-complex $X$ homotopy equivalent to ${\displaystyle \prod_{i=1}^{k} P(2m_i,n_i)}$. Then $G\cong (\mathbb{Z}_2)^l$ for some $l$ satisfying
$$ l\leq \mu(n_1)+\mu(n_2)+\cdots+\mu(n_k).$$
\end{theorem}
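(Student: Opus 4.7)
The plan is to lift the free $G$-action on $X$ to its universal cover $\tilde X$ and then invoke Theorem \ref{univ-cover}. Since $P(0, n) = \mathbb{C}P^n$ is simply connected and $\pi_1(P(2m, n)) \cong \mathbb{Z}_2$ for $m \geq 1$ (with universal cover $S^{2m} \times \mathbb{C}P^n$), the universal cover of $X \simeq \prod_{i=1}^{k} P(2m_i, n_i)$ is
\begin{displaymath}
\tilde X \simeq \prod_{i:\, m_i > 0} S^{2m_i} \times \prod_{i=1}^{k} \mathbb{C}P^{n_i},
\end{displaymath}
with deck-transformation group $\pi \cong (\mathbb{Z}_2)^{k_0}$, where $k_0 = |\{i : m_i > 0\}|$. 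The free $G$-action on $X$ lifts to a free action of a finite group $\tilde G$ on $\tilde X$ fitting in a short exact sequence
\begin{displaymath}
1 \longrightarrow \pi \longrightarrow \tilde G \longrightarrow G \longrightarrow 1.
\end{displaymath}

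Next, I would verify that the $\tilde G$-action on $\tilde X$ is mod 2 cohomologically trivial. The deck transformations act trivially: the antipodal map on $S^{2m_i}$ has mod 2 degree $1$, and complex conjugation acts trivially on $H^*(\mathbb{C}P^{n_i}; \mathbb{Z}_2)$. For a lift $\tilde g$ of $g \in G$, naturality of the covering projection $p : \tilde X \to X$ gives $\tilde g^* \circ p^* = p^* \circ g^* = p^*$ on mod 2 cohomology, so $\tilde g^*$ is the identity on $p^*(H^*(X; \mathbb{Z}_2))$, which contains all the classes $b_i$ coming from the $\mathbb{C}P^{n_i}$-factors. For the sphere classes $s_i \in H^{2m_i}(\tilde X; \mathbb{Z}_2)$ with $m_i > 0$, I would pass to integer cohomology and exploit the relation $s_i^2 = 0$: writing $\tilde g^*_{\mathbb{Z}}(s_i) = \alpha\, s_{\gamma(i)} + P_i$ with $\alpha \in \mathbb{Z}$, $\gamma$ permuting indices of equal $m_j$, and $P_i$ a degree-$2m_i$ class not involving $s_{\gamma(i)}$, the identity $(\alpha\, s_{\gamma(i)} + P_i)^2 = 0$ together with torsion-freeness of $H^*(\tilde X; \mathbb{Z})$ and the injectivity of multiplication by $s_{\gamma(i)}$ forces $\alpha P_i = 0$ and $P_i^2 = 0$; invertibility of $\tilde g^*_{\mathbb{Z}}$ then yields $\tilde g^*_{\mathbb{Z}}(s_i) = \pm s_{\gamma(i)}$. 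Reducing mod 2 gives $\tilde g^*(s_i) = s_{\gamma(i)}$, and cup-length considerations analogous to those in the proof of Theorem \ref{univ-cover} force $\gamma = \mathrm{id}$.

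Applying Theorem \ref{univ-cover} to the free, mod 2 cohomologically trivial $\tilde G$-action on $\tilde X$ gives $\tilde G \cong (\mathbb{Z}_2)^{l'}$ with $l' \leq k_0 + \mu(n_1) + \cdots + \mu(n_k)$. Since $\pi \cong (\mathbb{Z}_2)^{k_0}$ sits as a subgroup of $\tilde G$ and $G = \tilde G / \pi$, the quotient $G$ is elementary abelian of rank
\begin{displaymath}
l = l' - k_0 \leq \mu(n_1) + \cdots + \mu(n_k),
\end{displaymath}
as desired.

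The main obstacle is the verification of mod 2 triviality of the lifted action on the sphere classes $s_i$. Naturality only yields triviality on the image of $p^*$, so one must exploit the rigidity of integer-cohomology ring automorphisms of $\tilde X$ respecting the nilpotent relation $s_i^2 = 0$, together with careful control of potential permutations among same-dimensional sphere factors via cup-length arguments.
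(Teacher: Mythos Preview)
Your overall strategy---lift the $G$-action to the universal cover $\tilde X$, obtain the extension $1\to(\mathbb{Z}_2)^{k_0}\to\tilde G\to G\to 1$, apply Theorem~\ref{univ-cover} to $\tilde G$, and read off the rank of $G$ as $l'-k_0$---is exactly the paper's approach. The paper phrases it as ruling out $\mathbb{Z}_4$ and $\mathbb{Z}_p$ subgroups of $G$ by contradiction, but the mechanism is identical: in each case one passes to $\pi_1(X/G)$ acting on the universal cover and invokes Theorem~\ref{univ-cover}. Your handling of the case $m_i=0$ via $k_0$ is in fact more careful than the paper's, which writes $(\mathbb{Z}_2)^k$ throughout.

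You go beyond the paper in flagging that Theorem~\ref{univ-cover} requires the lifted $\tilde G$-action on $H^*(\tilde X;\mathbb{Z}_2)$ to be trivial, a hypothesis the paper silently assumes. Your verification for the $\mathbb{C}P$-classes via naturality of $p^*$ is correct. The argument for the sphere classes, however, is incomplete. In the decomposition $\tilde g^*_\mathbb{Z}(s_i)=\alpha\,s_{\gamma(i)}+P_i$, the remainder $P_i$ may contain other $s_l$'s with $m_l=m_i$, products $s_l\cdot(\text{monomial in }b_j)$ with $m_l<m_i$, or pure $b$-monomials; from $\alpha P_i=0$ and $P_i^2=0$ alone you cannot conclude $P_i=0$ (for instance $P_i$ could be a square-zero $b$-monomial when some $n_j$ is small, or involve lower sphere classes). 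More seriously, the final appeal to ``cup-length considerations analogous to Theorem~\ref{univ-cover}'' to force $\gamma=\mathrm{id}$ is circular: in that proof the permutation is killed \emph{using} the assumed mod~2 triviality, which is exactly what you are trying to establish, and in any case every sphere class has cup-length~$1$, so cup-length cannot separate them when several $m_i$ coincide. This is a genuine lacuna---one the paper shares but does not acknowledge---and closing it seems to require tracking how lifts $\tilde g$ respect the individual double covers $S^{2m_i}\times\mathbb{C}P^{n_i}\to P(2m_i,n_i)$ rather than a purely algebraic argument on $H^*(\tilde X)$.
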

\begin{proof}
First, we prove that $\mathbb{Z}_4$ cannot act freely and mod 2 cohomologically trivially on $X\simeq{\displaystyle \prod_{i=1}^{k} P(2m_i,n_i)}$. On the contrary, suppose $\mathbb{Z}_4$ acts freely and mod 2 cohomologically trivially on $X$ and let $Y$ denote its orbit space. Then there is an exact sequence
$$ 1\to (\mathbb{Z}_2)^k\to \pi_1(Y)\to \mathbb{Z}_4\rightarrow 1.$$
It follows from the exactness of the sequence that $\pi_1(Y)$ is a 2-group. Also, $\pi_1(Y)$ acts freely on the universal cover ${\displaystyle \prod_{i=1}^{k} S^{2m_i}\times\mathbb{C}P^{n_i}}$. By Theorem \ref{univ-cover}, $ \pi_1(Y) $ must be an elementary abelian 2-group and $\pi_1(Y)\cong(\mathbb{Z}_2)^{k+2}$. But this is not possible since $(\mathbb{Z}_2)^{k+2}\big/(\mathbb{Z}_2)^{k}$ is not isomorphic to $ \mathbb{Z}_4$. This implies that $\mathbb{Z}_4$ cannot act freely on ${\displaystyle \prod_{i=1}^{k} P(2m_i,n_i)}$. Similar proof also shows that there is no free  $ \zp $-action on ${\displaystyle \prod_{i=1}^{k} P(2m_i,n_i)}$ for odd primes $ p $.  Then using Theorem \ref{univ-cover}, one sees that $G\cong (\mathbb{Z}_2)^l$, where $ l $ is as desired.  
\end{proof}
\medskip
Examples from Section \ref{sec3} show that the preceding bound in Theorem \ref{doldfreerank} is sharp. As a consequence we get the following result on free rank.
\begin{corollary}

	For mod 2 cohomologically trivial actions
	\begin{equation*}
	\textrm{frk}_p\,(\,{\displaystyle \prod_{i=1}^{k} P(2m_i, n_i)} \,)= \left\{   \begin{array}{ll}
	\mu(n_1)+\mu(n_2)+\cdots+\mu(n_k) & \textrm{if $p=2$},\\
	0 & \textrm{if $p>2$.}\\
	\end{array} \right.
	\end{equation*}
		
\end{corollary}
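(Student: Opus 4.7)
The plan is to transfer the problem from $X$ to its universal cover, where Theorem~\ref{univ-cover} and Lemma~\ref{lemma-zp} already do most of the work. Since $\pi_1\bigl(P(2m_i,n_i)\bigr)\cong \mathbb{Z}_2$ with universal cover $S^{2m_i}\times\mathbb{C}P^{n_i}$, the space $X$ has $\pi_1(X)\cong(\mathbb{Z}_2)^k$ and universal cover $\tilde X \simeq \prod_{i=1}^{k} S^{2m_i}\times \mathbb{C}P^{n_i}$. For a free $G$-action on $X$ with orbit space $Y$, the covering-space long exact sequence produces
\[
1\to (\mathbb{Z}_2)^k \to \pi_1(Y) \to G \to 1,
\]
and the whole group $\pi_1(Y)$ acts freely on $\tilde X$, so constraints on free actions on $\tilde X$ become constraints on the extension.

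First I would rule out an element of order $4$ in $G$. If $\mathbb{Z}_4 \leq G$, restriction produces a free $\mathbb{Z}_4$-action on $X$, and the corresponding $\pi_1(Y)$ has order $2^{k+2}$. Applying Theorem~\ref{univ-cover} (with $k_0=0$ because all spheres are even-dimensional), $\pi_1(Y)$ is forced to be elementary abelian; but then every quotient of $\pi_1(Y)$ is also elementary abelian, contradicting $\pi_1(Y)/(\mathbb{Z}_2)^k \cong \mathbb{Z}_4$. Next I would rule out odd-prime-order elements: if an odd prime $p$ divides $|G|$, Cauchy's theorem yields a free $\mathbb{Z}_p$-action on $X$; the resulting $\pi_1(Y)$ has order $2^k p$ and (again by Cauchy) contains a $\mathbb{Z}_p$-subgroup acting freely on $\tilde X$, contradicting Lemma~\ref{lemma-zp}. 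Combining the two steps, $G$ is a $2$-group of exponent at most $2$, hence $G\cong(\mathbb{Z}_2)^l$ for some $l$.

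To extract the bound on $l$, I would apply Theorem~\ref{univ-cover} once more, now to the full group $\pi_1(Y)$ of order $2^{k+l}$ acting freely on $\tilde X$. This gives $\pi_1(Y)\cong(\mathbb{Z}_2)^{l'}$ with $l'\leq k+\mu(n_1)+\cdots+\mu(n_k)$. Since $\pi_1(Y)$ contains the subgroup $(\mathbb{Z}_2)^k$, one has $l'\geq k$, and the short exact sequence identifies $l = l'-k$, producing the desired inequality $l \leq \mu(n_1)+\cdots+\mu(n_k)$.

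The one technical point I expect to need care is that each invocation of Theorem~\ref{univ-cover} and Lemma~\ref{lemma-zp} requires the relevant $\pi_1(Y)$-action on $\tilde X$ to be mod $2$ cohomologically trivial. This should follow by naturality once one verifies that the deck transformations $(\mathbb{Z}_2)^k$ already act trivially on $H^*(\tilde X;\mathbb{Z}_2)$ (the antipodal map on $S^{2m_i}$ has degree $-1\equiv 1 \pmod 2$, while complex conjugation on $\mathbb{C}P^{n_i}$ acts by $(-1)^{\ast}$ on integral cohomology, again trivially mod $2$), and that a lift of the mod $2$ cohomologically trivial $G$-action on $X$ remains cohomologically trivial on $\tilde X$. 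Once this compatibility check is dispatched, the rest of the argument is essentially forced by the order arithmetic of the fundamental-group extension.
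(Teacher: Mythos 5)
Your argument for the upper bound reproduces, in essentially the same way, the paper's proof of Theorem~\ref{doldfreerank}: pass to the extension $1\to(\mathbb{Z}_2)^k\to\pi_1(Y)\to G\to 1$, let $\pi_1(Y)$ act freely on the universal cover, and invoke Theorem~\ref{univ-cover} to force $\pi_1(Y)$ to be elementary abelian, which kills both $\mathbb{Z}_4$ and odd-order subgroups and yields $l\leq \mu(n_1)+\cdots+\mu(n_k)$. That part is fine (and your flagged compatibility check on cohomological triviality of the lifted action is a real point, handled in the paper for a single factor by Lemma~\ref{lemma}).

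The genuine gap is that the corollary asserts an \emph{equality} $\textrm{frk}_2\bigl(\prod_{i=1}^k P(2m_i,n_i)\bigr)=\mu(n_1)+\cdots+\mu(n_k)$, and you only prove the inequality $\leq$. To finish you must exhibit a free, mod $2$ cohomologically trivial action of $(\mathbb{Z}_2)^{\mu(n_1)+\cdots+\mu(n_k)}$ on the product. The paper gets this from the constructions in Section~\ref{sec3}: for each $i$ with $n_i$ odd, the involution $T_1\bigl([z_0,\dots,z_n]\bigr)=[-\overline{z}_1,\overline{z}_0,\dots,-\overline{z}_n,\overline{z}_{n-1}]$ commutes with conjugation and descends to a free involution on $P(2m_i,n_i)$; taking the product of these involutions, one in each odd-$n_i$ factor and the identity elsewhere, gives a free $(\mathbb{Z}_2)^{\sum\mu(n_i)}$-action (free because each nontrivial element acts freely in at least one coordinate), and it acts trivially on mod $2$ cohomology. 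Without this realization step the claimed value of $\textrm{frk}_2$ is only an upper bound; the $p>2$ clause, by contrast, is fully covered by your odd-prime exclusion.
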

\begin{remark}
Note  that, the mod 2 cohomology algebra of  $\mathbb{R}P^m \times \mathbb{C}P^n$ and that of $P(m,n)$ are isomorphic. Also, they have the same universal cover $S^m \times \mathbb{C}P^n$. Hence, the proof of  Theorem \ref{doldfreerank} also works for $\displaystyle{\prod_{i=1}^{k} \mathbb{R}P^{2m_i} \times \mathbb{C}P^{n_i}}$. 
\end{remark}

 Taking into account  Theorem \ref{doldfreerank} and the free 2-rank of $ \prod_{i=1}^{k}\mathbb{R}P^{m_i} $, we conclude this section with the following:
\begin{question}
	Does the equality  \[\textrm{frk}_2\,(\prod_{i=1}^k P(m_i, n_i))= \eta(m_1)+\cdots+\eta(m_k)+ \mu(n_1)+\cdots+\mu(n_k) \] hold? Here, $ \mu(n_i)=1 $ for $ n_i $ is odd and $ \mu(n_i)=0 $ for $ n_i $ is even and $ \eta $ is defined by
 \begin{equation*}
	\eta(m)= \left\{   \begin{array}{ll}
	0 & \textrm{if $m$ is even},\\
	1 & \textrm{if $m\equiv1\!\!\!\!\pmod 4$, }\\
	2 & \textrm{if $m\equiv3\!\!\!\!\pmod 4$. }\\
	\end{array} \right.
	\end{equation*}
\end{question}
\bigskip
\section{Cohomology algebra of orbit spaces of free involutions on Dold manifolds }\label{sec5}

 In this section, we determine the possible mod 2 cohomology algebra of orbit spaces of arbitrary free involutions on Dold manifolds. 
We require the following result in the proof of the main theorem.
\begin{lemma}\label{lemma}
	Let $\z2 $ act freely on $ \scp $. Then the induced action on $ H^*(\scp;\z2) $ is trivial.
\end{lemma}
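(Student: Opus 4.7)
Let $X=\scp$. I would work with integer coefficients and then reduce. Recall by K\"unneth that
\[
H^*(X;\mathbb{Z})\cong\mathbb{Z}[a,s]/\langle a^{n+1},s^2\rangle,\qquad \deg a=2,\ \deg s=m.
\]
Any $\z2$-action on $X$ induces a graded ring automorphism $g^*$ on this integral ring, and by naturality its mod $2$ reduction equals the induced map on $H^*(X;\z2)$. It therefore suffices to show $g^*(a)=\pm a$ and $g^*(s)=\pm s$ over $\mathbb{Z}$, so that the reduction modulo $2$ yields the identity.

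The key algebraic step exploits the two defining relations. For $g^*(a)$: when $m\neq 2$, $H^2$ is generated by $a$ and invertibility of $g^*$ forces $g^*(a)=\pm a$. When $m=2$ with $n\geq 2$, writing $g^*(a)=\alpha a+\beta s$ and expanding
\[
(g^*(a))^{n+1}=(n+1)\alpha^n\beta\,a^n s
\]
(using $s^2=0$ and $a^{n+1}=0$), the vanishing of this product combined with invertibility forces $\beta=0$ and $\alpha=\pm 1$. For $g^*(s)$: if $m$ is odd, or $m$ is even with $m/2>n$, then $H^m$ is one-dimensional and $g^*(s)=\pm s$ is immediate. Otherwise, writing $g^*(s)=\lambda a^{m/2}+\mu s$ and computing $(g^*(s))^2=\lambda^2 a^m+2\lambda\mu\,a^{m/2}s$, integrality together with the surjectivity of $g^*$ forces $\lambda=0$ and $\mu=\pm 1$ (either $\lambda^2=0$ directly when $a^m\neq 0$, or $\lambda\mu=0$ combined with the impossibility of $\mu=0$ when $a^m=0$).

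The one remaining case is $m=2,\,n=1$, where $X=S^2\times S^2$, both $a$ and $s$ have degree $2$ and square to zero, and the algebra admits swap-type automorphisms $g^*(a)=\pm s,\,g^*(s)=\pm a$. To eliminate these I would invoke the freeness hypothesis through the Euler characteristic. By the Cartan--Leray identification $H^*(X/\z2;\mathbb{Q})\cong H^*(X;\mathbb{Q})^{\z2}$ and multiplicativity of Euler characteristic for free covers,
\[
\sum_i(-1)^i\dim H^i(X;\mathbb{Q})^{\z2}=\chi(X)/2=2.
\]
A direct case check on each sign choice $g^*(a)=\pm s,\,g^*(s)=\pm a$, constrained by $(g^*)^2=\mathrm{id}$ and the preservation of the cup product, shows that the invariants reduce to the zeroth-degree generator alone, yielding Euler characteristic $1$---a contradiction. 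Hence only the ``diagonal'' automorphisms remain, and these reduce trivially mod $2$. The main obstacle is precisely this subcase, where the purely algebraic argument fails because the generators share both a degree and a cup length, and freeness must be invoked through the Euler characteristic constraint; elsewhere the proof is a direct unwinding of the cup product relations in $H^*(X;\mathbb{Z})$.
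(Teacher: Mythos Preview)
Your approach is the same as the paper's: compute on integral cohomology, show the induced automorphism sends each generator to $\pm$ itself, and then reduce mod $2$. The paper simply asserts the classification of $g^*_{\mathbb{Z}}$ (listing the three sign-change possibilities) without justification; you supply the missing case analysis, including the degenerate case $m=2$, $n=1$ where the algebra alone does not rule out swap-type automorphisms. So your argument is more complete than the paper's along the same line.

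One small correction in that exceptional case: for the swap-type involutions on $S^2\times S^2$ the rational invariants are not just $H^0$. For $a\leftrightarrow s$ (resp.\ $a\mapsto -s$, $s\mapsto -a$) the invariants are spanned by $1$, $a+s$ (resp.\ $a-s$), and $as$, so the Euler characteristic of the invariants is $3$, not $1$. The contradiction with $\chi(X)/2=2$ survives, so your conclusion stands; alternatively, the Lefschetz number of either swap is $1+0+1=2\neq 0$, which gives the contradiction directly.
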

\begin{proof}
	
	Recall that,
	$$H^*(S^m\times \mathbb{C}P^n; \Z) \cong \Z[x,y]/\big\langle x^{2}, y^{\hspace{.3mm}n+1} \big\rangle,$$ where $\deg (x)=m$ and $\deg (y) =2$.  Suppose $ \z2 $ acts freely on $\scp  $ and let $ g $ be the generator of $ \z2 $. Let $ g_\Z^* $ be the induced action on $ H^*(\scp;\Z) $ corresponding to $ g $. If $ g_\Z^* $ is non-trivial then it has one of the following possibilities:
	\begin{enumerate}
		\item[]  (i)  $ x \mapsto -x $ and $ y\mapsto y ,$
		\item[] (ii) $ x \mapsto x $ and $ y\mapsto -y ,$ 
		\item[] (iii)  $ x \mapsto -x $ and $ y\mapsto -y $.
	\end{enumerate}
	  Since $ H^k(\scp;\Z) $ is torsion-free for any $ k\geq 0 $, we get $$  H^k(\scp;\z2)\cong  H^k(\scp;\Z) \otimes \z2 .$$  Let $ \phi: \Z \to \z2$ be the non-trivial homomorphism. Corresponding to $ \phi $ and $ g $ we get the following commutative diagram:
	
	\[\begin{tikzcd}
	H^k(\scp;\Z) \ar{r} {g_\Z^*} \ar[swap]{d}{\phi^*} & H^k(\scp;\Z) \ar{d}{\phi^*} \\
	H^k(\scp;\z2) \ar[swap]{r}{g_{\z2}^*} &  H^k(\scp;\z2), 
	\end{tikzcd}
	\]
	where $ g_\Z^* $ and $ g_{\z2}^* $ are the induced maps on cohomology in $ \Z $ and $ \z2 $-coefficients, respectively. This shows that $ g_{\z2}^* =g_\Z^* \otimes \z2$ and hence  we get $ g_{\z2}^* $ is the identity map.
\end{proof}
\medskip
\begin{remark}
	The preceding lemma shows that if $\zz$ acts freely on $\scp$, then the induced action on mod 2 cohomology is trivial.
\end{remark}	
	
\begin{theorem}\label{coho-dold}
Let $G=\mathbb{Z}_2$ act freely on a finite-dimensional CW-complex homotopy equivalent to the Dold manifold $P(m,n)$ with a trivial action on mod 2 cohomology. Then $H^*\big(P(m,n)/G; \mathbb{Z}_2\big)$ is isomorphic to one of the following graded algebras:
\begin{enumerate}\label{case1}
\item $\mathbb{Z}_2[x, y, z]/ \big\langle x^2, y^{\frac{m+1}{2}}, z^{n+1} \big\rangle,$
where $\deg(x)=1$, $\deg(y)=2$, $\deg(z)=2$ and $m$ is odd ;
\item \label{case2}  $\mathbb{Z}_2[{x}, {y}, {z}]/\big\langle {f}, {g}, {z}^{\frac{n+1}{2}}+h\big\rangle,$\\
where $\deg({x})=1$, $\deg({y})=1$, $\deg({z})=4$, $n$ is odd,
$$f= \big(x^{m+1}+ \alpha_1 x^my+\alpha_2x^{m-1}y^2\big) \hspace{2mm} \textrm{and}\hspace{2mm} g= \big(y^3+ \beta_1 xy^2+\beta_2x^2y\big) ,$$
where $\alpha{_i}$, $\beta_j\in \z2$, and  $  h \in \z2[{x}, {y}, {z }]$ is either a zero polynomial or it is a homogeneous polynomial of degree $ 2n+2 $ with the highest power of $ {z} $ is less than or equal to $ \frac{n-1}{2} $.
\end{enumerate}
\end{theorem}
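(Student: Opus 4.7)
The plan is to apply the Leray--Serre spectral sequence of the Borel fibration
$$P(m,n)\hookrightarrow P(m,n)_G\longrightarrow B_G,$$
for $G=\mathbb{Z}_2$. Since the action is free, the Borel construction is homotopy equivalent to the orbit space $P(m,n)/G$, so the spectral sequence converges to $H^*(P(m,n)/G;\mathbb{Z}_2)$. The hypothesis of mod 2 cohomologically trivial action makes the local coefficient system simple, giving
$$E_2^{*,*}\cong\mathbb{Z}_2[t]\otimes\mathbb{Z}_2[a,b]\big/\langle a^{m+1},b^{n+1}\rangle,$$
with $|t|=1$, $|a|=1$, $|b|=2$. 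Finite-dimensionality of $P(m,n)/G$ forces the polynomial factor $\mathbb{Z}_2[t]$ to die at $E_\infty$, so at least one of $d_2(a)\in\{0,t^2\}$, $d_2(b)\in\{0,t^2 a\}$, or (when $d_2(b)=0$) $d_3(b)\in\{0,t^3\}$ must be non-zero. The Leibniz rule applied to $a^{m+1}=0$ and $b^{n+1}=0$ forces the corresponding parity: $m$ odd whenever $d_2(a)\ne 0$, and $n$ odd whenever $b$ transgresses non-trivially.

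The proof then proceeds by a case analysis on which generator first carries a non-trivial differential. In the first case, $m$ is odd and $d_2(a)=t^2$ while the differential on $b$ vanishes. Direct bookkeeping of kernels and images shows that every $t^k$ with $k\geq 2$ is killed and the surviving generators of the fiber are $a^2$ and $b$; all higher differentials vanish by bidegree, yielding
$$E_\infty\cong \mathbb{Z}_2[t,a^2,b]\big/\langle t^2,(a^2)^{(m+1)/2},b^{n+1}\rangle.$$
Because $E_\infty$ is concentrated in horizontal filtration $\{0,1\}$, there is no room for lower-filtration correction terms in any of these relations, so setting $x=t$, $y=a^2$, $z=b$ produces the algebra of case (1) with no extension ambiguity.

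In the second case, $n$ is odd and the class $b$ is eliminated via a non-trivial differential ($d_2(b)=t^2 a$ or $d_3(b)=t^3$). A similar computation reveals surviving generators of degrees $1$, $1$ and $4$, represented by $t$, $a$ and $b^2$, with $E_\infty$-relations in degrees $m+1$, $3$ and $2n+2$ respectively. Lifting these relations to $H^*(P(m,n)/G;\mathbb{Z}_2)$ is now non-trivial: each relation may pick up a correction term from lower filtration, producing $f=x^{m+1}+\alpha_1 x^m y+\alpha_2 x^{m-1}y^2$, $g=y^3+\beta_1 xy^2+\beta_2 x^2 y$, and $z^{(n+1)/2}+h$, where the bound on the $z$-exponent of $h$ and its homogeneous degree are both dictated by the bidegree of the indeterminacy in the spectral-sequence filtration.

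The main obstacle is solving the extension problem in the second case: the $E_\infty$-page determines only the associated graded algebra, and passing to the cohomology ring requires enumerating all compatible lifts of the relations, which is exactly what produces the parameters $\alpha_i,\beta_j$ and the polynomial $h$. A secondary technical point is ruling out mixed differential patterns (such as $d_2(a)=t^2$ together with $d_2(b)=t^2 a$) that would apparently yield a structure outside the claimed list; this is handled by combining Kudo's transgression theorem and Steenrod-operation naturality with global constraints such as Poincar\'e duality on $P(m,n)/G$ to exclude these configurations.
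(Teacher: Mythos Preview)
Your Case~(1) agrees with the paper's, but for Case~(2) the two arguments diverge. The paper does \emph{not} analyse the Borel fibration $P(m,n)\hookrightarrow M\to B_{\mathbb Z_2}$ in that case; it explicitly remarks that this direct spectral sequence does not readily yield the answer. Instead, the paper first splits into cases according to $\pi_1(M)\in\{\mathbb Z_4,\ \mathbb Z_2\oplus\mathbb Z_2\}$, and when $\pi_1(M)=\mathbb Z_2\oplus\mathbb Z_2$ it passes to the universal cover and studies the fibration
\[
S^m\times\mathbb CP^n\hookrightarrow M\longrightarrow B_{\mathbb Z_2\oplus\mathbb Z_2}.
\]
Comparing this, via the subgroup inclusion $\mathbb Z_2\hookrightarrow\mathbb Z_2\oplus\mathbb Z_2$, with the known spectral sequence for $S^m\times\mathbb CP^n\to P(m,n)\to B_{\mathbb Z_2}$ shows that the fibre classes $c\in H^m$ and $d\in H^2$ are transgressive, landing in $f\in H^{m+1}(B_{\mathbb Z_2\oplus\mathbb Z_2})$ and $g\in H^{3}(B_{\mathbb Z_2\oplus\mathbb Z_2})$. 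Since $H^*(B_{\mathbb Z_2\oplus\mathbb Z_2})=\mathbb Z_2[x,y]$, both $f$ and $g$ are automatically polynomials in $x,y$ alone; reducing $f$ modulo $g$ then gives the three-term form $x^{m+1}+\alpha_1x^my+\alpha_2x^{m-1}y^2$.

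This is exactly where your outline has a gap. Working only with the $\mathbb Z_2$-Borel fibration with fibre $P(m,n)$, your two degree-one classes live in different filtrations ($x=t$ from the base, $y$ lifting $a$ from the fibre). The degree-$(m+1)$ relation then appears as $y^{m+1}\in F^1H^{m+1}$, and its filtration corrections are of the form $xP_1+x^2P_2$ with $P_i\in\mathbb Z_2[y,z]$; nothing in your argument rules out $z$-terms in $P_1,P_2$ once $m+1\ge 5$, so you do not obtain the specific shape of $f$ asserted in the theorem. Moreover, in the sub-case $d_2(b)=t^2a$ the bottom row of $E_3$ is not killed at all (one must chase higher differentials through classes such as $a^m b^{j}$), so ``a similar computation'' is far from routine, and the appeal to Kudo's theorem, Steenrod naturality and Poincar\'e duality to dispose of ``mixed'' patterns is an assertion rather than an argument. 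The paper's passage to the $\mathbb Z_2\oplus\mathbb Z_2$-cover is precisely the device that puts both degree-one generators on the base side and forces $f,g\in\mathbb Z_2[x,y]$; that idea is missing from your plan.
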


\begin{proof}
Recall that, the cohomology algebra of the Dold manifold is $$H^*\big(P(m,n); \mathbb{Z}_2\big) \cong \mathbb{Z}_2[a,b]/\big\langle a^{m+1}, b^{n+1} \big\rangle,$$ where $\deg (a)=1$ and $\deg (b) =2$. Consider an arbitrary free involution on $P(m,n)$ and let $M$ denote its orbit space. Note that, the fundamental group of $M$ can be either $\mathbb{Z}_4$ or $\mathbb{Z}_2\oplus \mathbb{Z}_2$, and it acts on the universal cover $S^m \times \mathbb{C}P^n$ of $P(m,n)$.\par 
{Case (1)}: First, consider the case $\pi_1(M)=\mathbb{Z}_4 $. It follows that $H^1(M;\mathbb{Z}_2)= \mathbb{Z}_2$. Consider the Leray-Serre spectral sequence associated to the Borel fibration $$P(m,n)\stackrel{i}{\hookrightarrow} M \stackrel{\pi}{\longrightarrow} B_{\mathbb{Z}_2}.$$ 
Since we have assumed that $ \pi_1(B_{\z2})=\z2 $ acts trivially on $ H^*(\p;\z2) $, the above fibration has a simple system of local coefficients. Hence, the spectral sequence has the form $$E_2^{p,q}\cong H^p(B_{\z2}; \mathbb{Z}_2) \otimes H^q(\p;\mathbb{Z}_2).$$ Since $ M $ is finite-dimensional, the spectral sequence does not degenerate at the $E_2$-page.  As $H^1(M;\mathbb{Z}_2)= \mathbb{Z}_2$, the only possibility of the differential is $d_2(1\otimes a)= t^2 \otimes 1$ and $d_2(1\otimes b)= 0$. This implies $m$ must be odd. By computing the ranks of $\Ker d_2$ and $\im  d_2$, we get rk$(E_3^{k,l})$ = 0 for all $k \geq 2$. Thus, the spectral sequence collapses at the $E_3$-page and we get that $ E_\infty^{*,*} \cong E_3^{*,*}$. Consequently, $$H^k(M;\z2)\cong \bigoplus_{i+j=k}E_{\infty}^{i,j}= E_{\infty}^{0,k} \oplus E_{\infty}^{1,k-1}$$ for  $0 \leq k \leq m+2n$.\par 
We see that ${ 1 \otimes a^2 } \in  E_2^{0,2}$  and $1 \otimes b \in E_2^{0,2}$  are permanent   
cocycles. Let $i^*$ and $\pi^*$ be  the homomorphisms induced on cohomology, and they are precisely the edge maps (see \cite{mccleary}, Theorem 5.9). Choose $y \in H^2(M;\mathbb{Z}_2)$ such that $i^*(y)=a^2$. Then we have $y^{\frac{m+1}{2}}=0$. Similarly, choose $z \in H^2(M; \mathbb{Z}_2)$ such that $i^*(z)= b$. Then we have $z^{n+1}=0$. Let $x=\pi^*(t) \in E_\infty^{1,0} \subseteq H^1(M;\z2) $ be determined by $t \otimes 1 \in E_2^{1,0}$. As $E_\infty^{2,0} = 0$, we have $x^2 = 0$. In this case, we obtain
\begin{align}\label{z4-coho}
H^*(M;\mathbb{Z}_2) \cong \mathbb{Z}_2[x,y,z]/\big\langle x^2, y^{\frac{m+1}{2}}, z^{n+1}\big\rangle,
\end{align}
where $\deg(x)=1$, $\deg(y)=2$ and $\deg(z)=2$.
\medskip

{Case (2)}: Next, consider the case $\pi_1(M)=\mathbb{Z}_2 \oplus\mathbb{Z}_2$. In this case, it is not possible to determine the cohomology algebra of the orbit spaces directly from the Leray-Serre spectral sequence associated to the Borel fibration $$P(m,n)\stackrel{i}{\hookrightarrow} M \stackrel{\pi}{\longrightarrow} B_{\mathbb{Z}_2}.$$ So we proceed as follows. First, note that $\mathbb{Z}_2 \oplus\mathbb{Z}_2$ acts freely on the universal cover $S^m \times \mathbb{C}P^n$. Consider only those free $\mathbb{Z}_2 \oplus\mathbb{Z}_2$-actions on $S^m \times \mathbb{C}P^n$ that factor through the Dold manifold. Let $\alpha$ and $\beta$ be the generators of $\mathbb{Z}_2 \oplus\mathbb{Z}_2$. Then both $ \alpha $ and $ \beta $ give rise to some free involutions on $ \scp $. Let  $\alpha$ act on $S^m \times \mathbb{C}P^n$ by the free involution defining the Dold manifold, and the action of $\beta$ on 
$S^m \times \,\mathbb{C}P^n$ is an arbitrary free involution commuting with $\alpha$. By defining the action as above we get all possible free $\mathbb{Z}_2$-actions on the Dold manifold. Consider the subgroup $\mathbb{Z}_2$ generated by $\alpha$. There is an inclusion $i: \mathbb{Z}_2\stackrel{}{\hookrightarrow} \mathbb{Z}_2 \oplus\mathbb{Z}_2 $ given by $ \alpha \rightarrow (\alpha,0)$. This gives a map between the classifying spaces  $$B_{\mathbb{Z}_2} \rightarrow B_{\:\mathbb{Z}_2 \:\oplus\:\mathbb{Z}_2}.$$ Recall that,  $$H^*(B_{\mathbb{Z}_2 \:\oplus\: \mathbb{Z}_2}; \mathbb{Z}_2)= \mathbb{Z}_2[x,y],$$ where both $x$ and $y$ have degree one. The map between the classifying spaces induces a map $$H^*(B_{\mathbb{Z}_2 \:\oplus\: \mathbb{Z}_2}; \mathbb{Z}_2) \rightarrow H^*(B_{\mathbb{Z}_2}; \mathbb{Z}_2)$$ given by $x \rightarrow t, y\rightarrow 0$. In particular, we get a map of fibrations

\begin{equation}\label{diagram}
\begin{tikzcd}
S^m\times\mathbb{C}P^n \arrow{r}{id} \arrow[swap]{d}{} & S^m\times\mathbb{C}P^n \arrow{d}{} \\
P(m,n) \arrow{r}{} \arrow[swap]{d}{}&  S^m \times \mathbb{C}P^n/{\mathbb{Z}_2\oplus \mathbb{Z}_2} \arrow{d}{}\\
B_{\mathbb{Z}_2} \arrow {r}{i} & B_{\mathbb{Z}_2\:\oplus\: \mathbb{Z}_2}.
\end{tikzcd}
\end{equation}

Consequently, we obtain a map between the Leray-Serre spectral sequences associated to the Borel fibrations $$i^*: E_2^{p,q}(\mathbb{Z}_2 \oplus \mathbb{Z}_2)\rightarrow E_2^{p,q}(\mathbb{Z}_2),$$ where by $E_2^{p,q}(G)$ we denote $$E_2^{p,q}(G)\cong H^p\big(B_G; H^q(S^m\times \mathbb{C}P^n;\mathbb{Z}_2)\big).$$
By Lemma \ref{lemma}, the action of $ \pi_1(B_G)$ on $ H^*(\scp;\z2) $ is trivial for $G=\z2 $ or $\zz$. This implies the system of local coefficients is simple. Hence, we get
$$E_2^{p,q}(G)\cong H^p(B_G; \mathbb{Z}_2) \otimes H^q(S^m\times \mathbb{C}P^n;\mathbb{Z}_2).$$
We know that,
$$H^*(S^m\times \mathbb{C}P^n; \mathbb{Z}_2) \cong \mathbb{Z}_2[c,d]/\big\langle c^{2}, d^{\hspace{.3mm}n+1} \big\rangle,$$ where $\deg (c)=m$ and $\deg (d) =2$. Let $d'$ be the differential corresponding to $E_2^{p,q}(\mathbb{Z}_2 )$ and $d''$ be the differential associated to $E_2^{p,q}(\mathbb{Z}_2 \oplus \mathbb{ Z}_2)$.\par
We first  determine all possible non-zero differentials corresponding to the first fibration of diagram (\ref{diagram}). This is done by exploiting the cohomology algebra of $ P(m, n) $. Then we determine all possible non-zero differentials corresponding to the second fibration by comparing  maps between the spectral sequences. In particular, this helps to show that the generator $c \in H^*(S^m, \mathbb{Z}_2)$  is transgressive in  $E^{*,*}(\mathbb{Z}_2 \oplus \mathbb{Z}_2)$.\par
 
Consider the Leray-Serre spectral sequence associated to $$S^m \times \mathbb{C}P^n \stackrel{}{\hookrightarrow} P(m,n) \stackrel{}{\longrightarrow} B_{\mathbb{Z}_2}.$$ 
We have $H^*(P(m,n); \mathbb{Z}_2) \cong \mathbb{Z}_2[a,b]/\big\langle a^{m+1}, b^{n+1} \big\rangle$, where $\deg (a)=1$ and $\deg (b) =2$. From this we observe that the 
differentials in  $E^{*,*}(\mathbb{Z}_2)$ are as follows: $$d'_{m+1}(1\otimes c)= t^{m+1} \otimes 1 \hspace{2mm} \textrm{and }\hspace{2mm} d'_{r}(1\otimes d)=0 \textrm{ for all $r\geq 2$}.$$  Then by the naturality of the Leray-Serre spectral sequence we get   
$$ d''_{m+1}(1\otimes c) = i^* d''_{m+1}(1\otimes c)=d'(1\otimes c) = t^{m+1} \otimes 1.$$
From this it follows that  the generator $c\in H^*(S^m; \mathbb{Z}_2)$  is transgressive in  $E^{*,*}(\mathbb{Z}_2 \oplus \mathbb{Z}_2)$. This gives $$d''_{m+1}(1\otimes c) = f,\hspace{2mm} \textrm{where } f \in H^{m+1}(B_{\z2 \oplus \z2}; \z2)$$ is a non-zero homogeneous polynomial containing the term $ x^{m+1} $. Therefore, $ f $ must be of  the following form:
 $$f= \big(x^{m+1}+ \gamma_1 x^my+\gamma_2x^{m-1}y^2+\dots+\gamma_ny^{m+1}\big) \otimes 1,$$ where $\gamma_i \in \z2$. 
Next we determine the possibilities for $d''_3(1 \otimes d)$. If $d''_3(1 \otimes d)=0$, then $ E^{i,0}_\infty(\mathbb{Z}_2 + \mathbb{Z}_2)$  is non-zero for infinitely many values of $ i $. This contradicts the fact that $S^m \times \mathbb{C}P^n/{\mathbb{Z}_2\oplus \mathbb{Z}_2}$ is a finite-dimensional CW-complex. Thus, \hspace{1mm}$d''_3(1 \otimes d)\neq0$ and this further imposes the condition that $n$ must be odd. 
 Hence, $d''_3(1 \otimes d)=g$, where $g\in H^3(B_{\z2 \oplus \z2}; \z2)$ is a non-zero homogeneous polynomial which does not contain the term $ x^3 $. Therefore, it is of the following form:
  $$g= \big(y^3+ \beta_1 xy^2+\beta_2x^2y\big) \otimes1. $$
Thus, we get that $ 1 \otimes d $ transgresses to $ g $. Recall that, transgression operator commutes with the Steenrod operations, which implies 
$$ d_5''(1\otimes d^2)=0. $$
Using $ g $ we can reduce $ f $ to the following simpler form:
$$f= \big(x^{m+1}+ \alpha_1 x^my+\alpha_2x^{m-1}y^2\big) \otimes1, $$
  where $ \alpha_i \in \z2 $.
Following all the preceding  arguments we get that the spectral sequence $E^{*,*}(\mathbb{Z}_2 \oplus \mathbb{Z}_2)$ degenerates at $E_{m+2}$-page. Hence, $$ E_\infty^{*,*} \cong E_{m+2}^{*,*}.$$\par
We see that $x \otimes 1 \in E_2^{1,0}$ and $y \otimes 1 \in E_2^{1,0}$ are permanent cocycles. Let $i^*$ and $\pi^*$ be the edge maps of the spectral sequence. Let $\tilde{x}=\pi^*(x) \in E_\infty^{1,0} \subseteq H^1\big(S^m \times \mathbb{C}P^n/{\mathbb{Z}_2\oplus \mathbb{Z}_2}; \mathbb{Z}_2\big) $ be determined by $x \otimes 1 \in E_2^{1,0}$. Similarly, let $\tilde{y}=\pi^*(y) \in E_\infty^{1,0}\subseteq H^1(\scp/\zz;\z2)$ be determined by $y \otimes 1 \in E_2^{1,0}$.
Then we get $ \pi^*(f)=\tilde{f}=0$ and $ \pi^*(g) =\tilde{g}=0$.
Note that, $1 \otimes d^2 \in E_2^{0,4}$ is a permanent cocycle and it determines an element in $ E_{\infty}^{0,4}$. Choose $\tilde{z} \in H^4\big(S^m \times \mathbb{C}P^n/{\mathbb{Z}_2\oplus \mathbb{Z}_2};\z2\big)$ such that $i^*(\tilde{z})=d^2$. Since $i^*(\tilde{z } ^{\frac{n+1}{2}})=0$, we get $\tilde{z } ^{\frac{n+1}{2}}+h=0  $, where $  h \in \z2[\tilde{x}, \tilde{y}, \tilde{z }]$ is either a zero polynomial or it is a homogeneous polynomial of degree $ 2n+2 $ with the highest power of $ \tilde{z} $ is less than or equal to $ \frac{n-1}{2} $. So, in this case, we obtain
\begin{align}\label{z2+z2-coho}
H^*(M;\mathbb{Z}_2) \cong \mathbb{Z}_2[\tilde{x},\tilde{y},\tilde{z}]/\big\langle \tilde{f}, \tilde{g}, \tilde{z}^{\frac{n+1}{2}}+h\big\rangle,
\end{align}
where $\deg(\tilde{x})=1$, $\deg(\tilde{y})=1$ and $\deg(\tilde{z})=4$.
Equations (\ref{z4-coho}) and (\ref{z2+z2-coho}) give all possible mod 2 cohomology algebra of orbit spaces of arbitrary free involutions on Dold manifolds.
\end{proof}
\medskip

\begin{remark}
	Recall that, $ P(m,0)= \Bbb RP^m $ and $ P(0,n)= \Bbb CP^n $. The orbit space of a free involution on $\Bbb RP^m  $ is the lens space as any arbitrary free involution on $\Bbb RP^m  $ lifts to a free $ \Bbb Z_4 $-action on $ S^m $ and its cohomology algebra is consistent with Theorem \ref{coho-dold} (1). 
 If $ n $ is odd, then the cohomology algebra of the orbit space of a free involution on $\Bbb CP^n  $  is 
\begin{equation}\label{equ3}
H^*( \mathbb{C}P^n/ \mathbb{Z}_2; \z2) \cong \mathbb{Z}_2[x,y]/\big\langle x^{3}, y^{\hspace{.3mm}\frac{n+1}{2}} \big\rangle,
\end{equation}
 where $\deg (x)=1$ and $\deg (y) =4$ \cite[Corollary 4.7]{msinghprojective}. This is consistent with Theorem \ref{coho-dold} (2).
\end{remark}
\begin{remark}
Conclusions of Theorem \ref{coho-dold} for $ m=1 $ are consistent with \cite[Theorem 1]{morita}.

\end{remark}

\begin{remark}
	Theorem \ref{coho-dold} implies that there are at least two conjugacy classes of free involutions on Dold manifolds. We note that, for an arbitrary free involution on a Dold manifold, the induced action on mod 2 cohomology is always trivial in the following cases:
	
	\begin{enumerate}
		\item[](i) $n$ is of the form $4k+1$ and $ m\geq4 $,
		\item[](ii) $n$ is even,
		\item[] (iii) $m\geq2n+2$.
	\end{enumerate}
	
\end{remark}

\begin{example}
Suppose both $ m$ and   $n $ are odd. Let $ g $ be a generator of $ \Bbb Z_4 $. Define the action of $ \mathbb{ Z}_4 $ on $ \scp $  by
$$g\Big((x_0, x_1, \dots, x_{m-1}, x_{m}), \,[z_0, z_1, \dots, z_{n-1}, z_{n}]\Big) =\Big((-x_1,x_0,\dots,-x_{m},x_{m-1}),\,[\overline{z}_1,\overline{z}_0,\dots,\overline{z}_n,\overline{z}_{n-1}]\Big).$$ Let $ \mathcal{Z} $ denote the orbit space. Further, the preceding action induces a free involution on the Dold manifold $ \p $ with the same orbit space $ \mathcal Z $. The projection $\scp \rightarrow S^m$ induces the map $p:\mathcal{Z}\rightarrow L^m(4,1)$, which is a locally trivial fiber bundle with fiber $\cpn$. Notice that, the system of local coefficients is trivial. Since the projection $ \pi $ has a section, the Leray-Serre cohomology  spectral sequence associated to the fiber bundle $$\Bbb C P^n\stackrel{i}{\hookrightarrow} \mathcal{Z} \stackrel{\pi}{\longrightarrow}L^m(4,1) $$ degenerates at the $ E_2 $-page. Consequently, the map $ i^*:H^*(\mathcal{Z};\z2)\to H^*(\cpn;\z2) $ is onto. Thus, the fiber is totally non-homologous to zero in $ \mathcal{Z} $ with respect to $ \z2 $ (see \cite[Chap. III, 7]{serre}).  Then by \cite [Chap. III, Prop. 9]{serre}, we get $$ H^*(\mathcal{Z};\z2)\cong \mathbb{Z}_2[x, y, z]/\big\langle x^{2}, y^{\hspace{.3mm}\frac{m+1}{2}}, z^{n+1}\big\rangle,$$ where $\deg (x)=1$, $\deg (y) =2$ and $\deg (z)=2$. This realizes Theorem \ref{coho-dold} (1). 
\end{example}
\medskip
\begin{example}
Suppose $n $ is odd. Let $ a $ and $ b $ be  generators of $ \zz $. Define a free  $ \z2 \oplus \z2 $-action on $ \scp $ as follows  
$$a\Big((x_0, x_1, \dots, x_{m-1}, x_{m}),
 \,[z_0, z_1, \dots, z_{n}]\Big) =\Big((-x_0, -x_1, \dots, -x_{m-1}, -x_{m}),\,[z_0, z_1, \dots, z_{n}]\Big)$$
 and
 $$b\Big((x_0, x_1, \dots, x_{m}),
 \,[z_0, z_1, \dots, z_{n-1}, z_{n}]\Big) =\Big((x_0, x_1, \dots, x_{m}),\,[-\overline{z}_1, \overline{z}_0, \dots, -\overline{z}_n, \overline{z}_{n-1}]\Big).$$ Let $\mathcal T$  be the quotient of the above action. Note that, the preceding action induces a free involution on the Dold manifold with the same quotient space $ \mathcal{T} $. The projection $\scp \rightarrow S^m$ induces the map $p:\mathcal T\rightarrow \Bbb R P^m$, which is a locally trivial fiber bundle with fiber the orbit space of $ \cpn $ by the $ \z2 $-action given by $$\big\{[z_0,z_1,\dots,z_n]\mapsto [-\overline{z}_1,\overline{z}_0,\dots,-\overline{z}_n,\overline{z}_{n-1}]\big\}.$$ 
 By equation (\ref{equ3}), we have
 $$H^*( \mathbb{C}P^n/ \mathbb{Z}_2; \z2) \cong \mathbb{Z}_2[x,y]/\big\langle x^{3}, y^{\hspace{.3mm}\frac{n+1}{2}} \big\rangle,$$
where $\deg (x)=1$ and $\deg (y) =4$. It can be seen that the system of local coefficients is trivial. Since the projection $ \pi $ has a section, the Leray-Serre cohomology  spectral sequence associated to the fiber bundle  $${\Bbb C P^n/ \z2}\stackrel{i}{\hookrightarrow} \mathcal T \stackrel{\pi}{\longrightarrow} \Bbb R P^m ,$$ degenerates at the $ E_2 $-page. Consequently, the map $ i^*: H^*(\mathcal T;\z2)\to H^*({\cpn/\z2};\z2) $ is onto. Thus, the fiber is totally non-homologous to zero in $ \mathcal T $  with respect to $ \z2 $ (see \cite[Chap. III, 7]{serre}). Then by \cite [Chap. III, Prop. 9]{serre}, we get $$ H^*(\mathcal T;\z2)\cong \mathbb{Z}_2[x, y, z]/\big\langle x^{3}, y^{\hspace{.3mm}\frac{n+1}{2}}, z^{m+1}\big\rangle,$$ where $\deg (x)=1$, $\deg (y) =4$ and $\deg (z)=1$. This realizes Theorem \ref{coho-dold} (2).
\end{example}
\bigskip
Since the mod 2 cohomology algebras of $\mathbb{R}P^m \times \mathbb{C}P^n$ and  $P(m,n)$ are isomorphic, and both have the same universal cover $S^m \times \mathbb{C}P^n$, the above proof also works for $\mathbb{R}P^m \times \mathbb{C}P^n$. Hence, we get the following corollary.

\begin{corollary}
Let $G=\mathbb{Z}_2$ act freely on $\mathbb{R}P^m \times \mathbb{C}P^n$ 
with a trivial action on mod 2 cohomology.
Then $H^*\big((\mathbb{R}P^m \times \mathbb{C}P^n)/G; \mathbb{Z}_2\big)$ is isomorphic to one of the following graded algebras:
\begin{enumerate}
\item $\mathbb{Z}_2[x,y,z]/ \big\langle x^2, y^{\frac{m+1}{2}}, z^{n+1} \big\rangle,$\\
where $\deg(x)=1$, $\deg(y)=2$, $\deg(z)=2$ and $m$ is odd;
\item$\mathbb{Z}_2[{x},{y},{z}]/\big\langle {f}, {g}, {z}^{\frac{n+1}{2}}+h\big\rangle,$\\
where $\deg({x})=1$, $\deg({y})=1$, $\deg({z})=4$, $n$ is odd,
$$f= \big(x^{m+1}+ \alpha_1 x^my+\alpha_2x^{m-1}y^2\big) \hspace{2mm} \textrm{and}\hspace{2mm} g= \big(y^3+ \beta_1 xy^2+\beta_2x^2y\big) ,$$       
where $\alpha{_i}$, $\beta_j\in \z2$, and  $  h \in \z2[{x}, {y}, {z }]$ is either a zero polynomial or it is a homogeneous polynomial of degree $ 2n+2 $ with the highest power of $ {z} $ is less than or equal to $ \frac{n-1}{2} $. 
\end{enumerate}
\end{corollary}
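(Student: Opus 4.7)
The plan is to observe that the proof of Theorem \ref{coho-dold} rests on only three structural features of $\p$: its mod 2 cohomology algebra $\mathbb{Z}_2[a,b]/\langle a^{m+1},b^{n+1}\rangle$, its universal cover $\scp$, and the fact that $\pi_1=\mathbb{Z}_2$. By the K\"unneth theorem and the covering $S^m\to\mathbb{R}P^m$, the space $\mathbb{R}P^m\times\mathbb{C}P^n$ shares all three features with $\p$. I therefore expect to transport the argument of Theorem \ref{coho-dold} almost verbatim, with only a short check of the fundamental-group setup at the outset.

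First I would verify that for any free $\mathbb{Z}_2$-action on $\mathbb{R}P^m\times\cpn$ with orbit space $M$, the exact sequence $1\to\mathbb{Z}_2\to\pi_1(M)\to\mathbb{Z}_2\to 1$ forces $\pi_1(M)$ to be either $\mathbb{Z}_4$ or $\zz$, giving the same case split as in the Dold proof. In the $\mathbb{Z}_4$ case I would run the Leray-Serre spectral sequence for the Borel fibration $\mathbb{R}P^m\times\cpn\hookrightarrow M\to B_{\mathbb{Z}_2}$ with trivial local coefficients (by hypothesis); the fiber having the same cohomology ring as $\p$ forces the only non-trivial differential to be $d_2(1\otimes a)=t^2\otimes 1$, which makes $m$ odd, and the edge-map extraction of generators $x,y,z$ then yields algebra (1).

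In the $\zz$ case I would lift the $\mathbb{Z}_2$-action to a free $\zz$-action on $\scp$, this time taking the generator $\alpha$ to be the deck transformation of $\mathbb{R}P^m\times\cpn$ (antipodal on $S^m$, trivial on $\cpn$) rather than the Dold involution, with $\beta$ an arbitrary commuting free involution. Lemma \ref{lemma} still supplies trivial local coefficients, so the comparison of spectral sequences via $B_{\mathbb{Z}_2}\to B_{\zz}$ set up in diagram (\ref{diagram}) is available. The transgression $d'_{m+1}(1\otimes c)=t^{m+1}\otimes 1$ in the fibration whose total space is $\mathbb{R}P^m\times\cpn$ follows from $H^*(\mathbb{R}P^m\times\cpn;\mathbb{Z}_2)\cong H^*(\p;\mathbb{Z}_2)$, so by naturality $1\otimes c$ is transgressive in $E^{*,*}(\zz)$ and transgresses to the same polynomial $f$. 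The determination of $g=d''_3(1\otimes d)$ via finite-dimensionality of $M$ and the forced oddness of $n$, the reduction of $f$ modulo $g$, the vanishing $d''_5(1\otimes d^2)=0$ from the Steenrod-compatibility of the transgression, the degeneration at the $E_{m+2}$-page, and the edge-map extraction of $x,y,z$ together with the relation $z^{\frac{n+1}{2}}+h=0$ all go through identically, yielding algebra (2).

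The only subtlety I anticipate is confirming that every free $\mathbb{Z}_2$-action on $\mathbb{R}P^m\times\cpn$ with $\pi_1(M)=\zz$ genuinely arises from such a covering $\zz$-action on $\scp$; but this is immediate, since the map $\scp\to M$ is a regular cover with deck group $\zz$, and the $\mathbb{Z}_2$-subgroup giving the intermediate cover $\mathbb{R}P^m\times\cpn$ must be the one generated by the antipode on $S^m$, with the quotient $\mathbb{Z}_2$ acting as the prescribed free involution. I do not expect any serious obstacle, which is consistent with the author's presentation of this result as a corollary of Theorem \ref{coho-dold}.
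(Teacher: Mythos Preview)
Your proposal is correct and matches the paper's own justification essentially verbatim: the paper simply remarks that since $\mathbb{R}P^m\times\mathbb{C}P^n$ and $P(m,n)$ have isomorphic mod 2 cohomology algebras and the same universal cover $S^m\times\mathbb{C}P^n$, the proof of Theorem \ref{coho-dold} carries over without change. Your expanded walkthrough of the two cases, including the replacement of the Dold involution by the product deck transformation as the generator $\alpha$, is exactly the unpacking of that one-sentence remark.
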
 
\medskip
\section{Application to equivariant maps}\label{sec6}
In this final section, by using the cohomology structure of $ H^*(\p/\z2;\z2) $, we  determine when  there exists a $ \z2 $-equivariant map from $ S^n $ to $ \p $.   
Let $X$ be a compact Hausdorff space with a free involution and consider the antipodal involution on  ${S}^n$. Conner and Floyd \cite{conner-floyd} defined the index of the involution on $ X $ as
$$\textrm{ind}(X) = \max ~ \{~ n ~|~ \textrm{there exists a}~ \mathbb{Z}_2 \textrm{-equivariant map}~ {S}^n \to X \}.$$ Using Borsuk-Ulam theorem one can see that ind($S^n)=n$. Using Characteristic classes we can derive some upper bound for the ind$ (X) $.  Let $w \in H^1(X/G; \mathbb{Z}_2 )$ be the Stiefel-Whitney class of the principal $G$-bundle $X \to X/G$. Conner and Floyd also defined
$$\textrm{{co-ind}}_{\mathbb{Z}_2}(X) = \max ~\{~n~|~ w^n \neq 0 \},$$
and showed that \cite[(4.5)]{conner-floyd} 
$$\textrm{ind}(X)\leq \textrm{{co-ind}}_{\mathbb{Z}_2}(X).$$\par
 Our final result is the following:
\begin{proposition}
If $ n $ is even, then there is no $\mathbb{Z}_2$-equivariant map ${S}^k \to \p$ for $k \geq 2$.
\end{proposition}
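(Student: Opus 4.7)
The plan is to bound the Conner--Floyd index $\mathrm{ind}(P(m,n))$ by bounding $\mathrm{co\textrm{-}ind}_{\mathbb{Z}_2}(P(m,n))$, using the explicit description of the mod 2 cohomology algebra of the orbit space supplied by Theorem \ref{coho-dold}. Once we show that the Stiefel--Whitney class $w$ of the double cover satisfies $w^2=0$, the inequality $\mathrm{ind}(P(m,n))\le \mathrm{co\textrm{-}ind}_{\mathbb{Z}_2}(P(m,n))\le 1<2\le k$ immediately rules out any $\mathbb{Z}_2$-equivariant map $S^k\to P(m,n)$.

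First I would dispose of the degenerate case. By the Euler characteristic argument recalled at the start of Section \ref{sec3}, a Dold manifold $P(m,n)$ admits a free involution only if at least one of $m,n$ is odd; since $n$ is assumed even, a free involution exists only when $m$ is odd. If no free involution exists on $P(m,n)$, there is nothing to prove, so I may assume $m$ is odd.

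Next I would invoke the remark after Theorem \ref{coho-dold}, specifically item (ii), to conclude that every free involution on $P(m,n)$ with $n$ even acts trivially on mod 2 cohomology. Hence Theorem \ref{coho-dold} applies to the orbit space $M=P(m,n)/\mathbb{Z}_2$. The second case of that theorem forces $n$ to be odd, so only case (1) can occur here, giving
\[
H^*(M;\mathbb{Z}_2)\cong \mathbb{Z}_2[x,y,z]\big/\langle x^2,\; y^{(m+1)/2},\; z^{n+1}\rangle,
\]
with $\deg x=1$, $\deg y=\deg z=2$. In particular $H^1(M;\mathbb{Z}_2)\cong\mathbb{Z}_2=\langle x\rangle$ and $x^2=0$.

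Finally, I would identify the characteristic class. The Stiefel--Whitney class $w\in H^1(M;\mathbb{Z}_2)$ classifies the nontrivial principal $\mathbb{Z}_2$-bundle $P(m,n)\to M$, hence $w\ne 0$; as $H^1(M;\mathbb{Z}_2)$ is one-dimensional we must have $w=x$, and consequently $w^2=x^2=0$. Therefore $\mathrm{co\textrm{-}ind}_{\mathbb{Z}_2}(P(m,n))\le 1$, and the Conner--Floyd estimate $\mathrm{ind}(X)\le \mathrm{co\textrm{-}ind}_{\mathbb{Z}_2}(X)$ yields $\mathrm{ind}(P(m,n))\le 1$, ruling out $\mathbb{Z}_2$-equivariant maps $S^k\to P(m,n)$ for all $k\ge 2$. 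The only subtle point is verifying that case (2) of Theorem \ref{coho-dold} is indeed excluded when $n$ is even, but this is immediate from the case hypothesis that $n$ be odd there; there is no serious technical obstacle in this argument.
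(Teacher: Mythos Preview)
Your argument is correct and follows essentially the same route as the paper: invoke Theorem \ref{coho-dold}, observe that only case (1) can occur when $n$ is even, read off $x^2=0$, identify $x$ with the Stiefel--Whitney class, and conclude $\mathrm{ind}(P(m,n))\le \mathrm{co\textrm{-}ind}_{\mathbb{Z}_2}(P(m,n))=1$. You are in fact slightly more careful than the paper in two places: you explicitly invoke the remark guaranteeing trivial action on mod 2 cohomology (so that Theorem \ref{coho-dold} applies), and you identify $w$ with $x$ by a dimension count on $H^1$ rather than by tracing the edge homomorphism $\pi^*$ as the paper does; both are valid and the overall strategy is the same.
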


\begin{proof}
Let $ X=\p $. Take $ f $ a classifying map $$f : X/\z2 \to B_{\z2}$$ for the principal $\z2$-bundle $X \to X/\z2$. Consider the Borel fibration $X\stackrel{}{\hookrightarrow} X_{\z2} \stackrel{\pi}{\longrightarrow} B_{\z2}$. We know for free actions $ X_{\z2} \simeq X/\z2 $. Let $\eta: X/\z2 \to X_{\z2}$ be the  homotopy equivalence. Then $\pi \circ \eta : X/\z2 \to B_{\z2}$ also classifies the principal $\z2$-bundle $X \to X/\z2$, and hence it is homotopic to $f$. Therefore, it is sufficient to consider the map $$\pi^*: H^1(B_{\z2}) \to H^1(X_{\z2}).$$ The image of the Stiefel-Whitney class of the universal principal $\z2$-bundle $\z2 \hookrightarrow E_{\z2} \longrightarrow B_{\z2}$ is the Stiefel-Whitney class of $X \to X/\z2$. Since we have assumed that $ n  $ is even, only case (1) of Theorem \ref{coho-dold} is applicable.
It follows from the proof of Theorem \ref{coho-dold} (1) that $x \in H^1(X/\z2)$ is the Stiefel-Whitney class with $x \neq 0$ and $x^2=0$. This gives $\textrm{{co-ind}}_{\mathbb{Z}_2}(X) = 1$ and $\textrm{ind}(X)\leq 1$. Hence, there is no $\mathbb{Z}_2$-equivariant map ${S}^k \to \p$ for $k \geq 2$.
\end{proof}

\ack{I thank my Ph.D. advisor Dr. Mahender Singh for suggesting the problem and for many valuable discussions and corrections concerning this paper. I also thank UGC-CSIR for the financial support towards this work.}

\bibliographystyle{plain}

\end{document}